\numberwithin{equation}{section}
\newtheorem{thm}{Theorem}[section]
\newtheorem*{thm*}{Theorem}
\newtheorem{prop}[thm]{Proposition}
\newtheorem*{prop*}{Proposition}
\newtheorem{cor}[thm]{Corollary}
\newtheorem{defin}[thm]{Definition}
\newtheorem{lemma}[thm]{Lemma}
\newtheorem*{lemma*}{Lemma}
\newtheorem{remark}[thm]{Remark}
\newtheorem*{remark*}{Remark}
\newtheorem*{remarks*}{Remarks}
\begin{document}

\title[Max. Symm. and Unimod. Solv.]{Maximal Symmetry and unimodular solvmanifolds}
\author[Michael Jablonski]{Michael Jablonski
}
\thanks{
	This work was supported by a grant from the Simons Foundation (\#360562, michael jablonski) and NSF grant DMS-1612357.}

\begin{abstract}  Recently, it was shown that Einstein solvmanifolds have maximal symmetry in the sense that their isometry groups contain the isometry groups of any other left-invariant metric on the given Lie group.  Such a solvable Lie group is necessarily non-unimodular.  In this work we consider unimodular solvable Lie groups and prove that there is always some metric with maximal symmetry.  Further, if the group at hand admits a Ricci soliton, then it is the isometry group of the  Ricci soliton which is maximal.

\end{abstract}

\maketitle

\section{Introduction}

In the search for  distinguished geometries on a given manifold, one might look towards those with large isometry groups.  Even further, we might ask when a manifold admits a metric of maximal symmetry in the sense that the isometry group of any other metric is a subgroup of the isometry group of our given metric.  At best, of course, such a property could only hold up to conjugation by diffeomorphisms.  We say a manifold is a \textit{maximal symmetry space} if there is a unique largest isometry group up to conjugation.

Clearly constraints must be placed on the manifold for it to be a maximal symmetric space.  For example, $\mathbb H^n$ is diffeomorphic to $\mathbb R^n$, but there is no Lie group which can be the isometry group of a metric and contain both $Isom(\mathbb H^n)$ and $Isom(\mathbb R^n)$ as subgroups.  This follows from the fact that for an n-manifold, the isometry group has dimension bounded above by $n+\frac{1}{2} n(n-1)$ and that the Levi subgroups (i.e. maximal semi-simple subgroups) of $Isom(\mathbb H^n)$ and $Isom(\mathbb R^n)$ are not equal.

An example of a manifold which is a maximal symmetry space is $S^2$, but not all spheres have this property.  So it seems that one should either constrain the topology of a space or restrict to certain classes of metrics if one hopes to learn which types of symmetry groups can arise and how to organize them.   Instead of restricting the topology of $M$, we restrict ourselves to the setting of homogeneous metrics and even more so to just Lie groups with left-invariant metrics.

\begin{defin}\label{def:  max symm}  Let $G$ be a Lie group.  A left-invariant metric $g$ on $G$ is said to be maximally symmetric if given any other left-invariant metric $g'$, there exists a diffeomorphism $\phi \in \mathfrak{Diff}(G)$ such that
	$$Isom(M,g') \subset Isom(M, \phi^* g) =  \phi Isom(M,g) \phi^{-1},$$
where the above inclusion is as a subgroup.  We say $G$ is a maximal symmetry space if it admits a metric of maximal symmetry.
\end{defin}

Although our primary interest is in solvable Lie groups with left-invariant metrics, we briefly discuss the more general setting of Lie groups.  For $G$ compact and simple,  we have that   $Isom(G)_0$, the connected component of of the identity, for any left-invariant metric, can be embedded into the isometry group of the bi-invariant metric \cite{Ochiai-Takahashi:TheGroupOfIsometriesOfALeftInvariantRiemannianMetric}.  This does not quite say that compact simple Lie groups are maximal symmetry spaces, but it is close.

In the setting of non-compact semi-simple groups, one does not have a bi-invariant metric, but there is a natural choice which plays the role of the bi-invariant metric and similar results are known, see  \cite{Gordon:RiemannianIsometryGroupsContainingTransitiveReductiveSubgroups}; note the work of Gordon actually goes beyond the Lie group setting and considers a larger class of homogeneous spaces with transitive reductive Lie group and studies their isometry groups.

If $S$ is a nilpotent or completely solvable unimodular group, then it is a maximal symmetry space.  Although not stated in this language, this is a result of Gordon and Wilson \cite{GordonWilson:IsomGrpsOfRiemSolv}; see Section \ref{sec: preimlin} below for more details.  Furthermore, when such a Lie group admits a Ricci soliton, the soliton metric has the maximal isometry group \cite{Jablo:ConceringExistenceOfEinstein}.

In the non-unimodular setting, it is known that not all solvable groups can be maximal symmetry spaces, however in special circumstances they are - e.g. if a solvable group admits an Einstein metric, then it is a maximal symmetry space and the Einstein metric actually has the largest isometry group; see \cite{GordonJablonski:EinsteinSolvmanifoldsHaveMaximalSymmetry} for more details.

Our main result is for unimodular solvable Lie groups.
\begin{thm}\label{thm: main result}  Let $S$ be a simply-connected, unimodular solvable Lie group.  Then $S$ is a maximal symmetry space.
\end{thm}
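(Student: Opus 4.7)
The plan is to extend the Gordon--Wilson theorem from the completely solvable unimodular case to the general unimodular case by means of a nilshadow construction, together with the nilpotent results already recorded in the introduction.

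First, I would recast the problem in Lie-algebraic terms. Each left-invariant metric on $S$ corresponds to an inner product on $\mathfrak{s}$, and by the structural results of Gordon--Wilson the full isometry group is generated by a transitive solvable subgroup --- obtained from $S$ through a ``modification'' --- together with a compact group of orthogonal automorphisms of the modified Lie algebra. The problem thus reduces to selecting one inner product whose associated isometry group contains, up to conjugation by a diffeomorphism of $S$, the isometry group of every other choice.

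Second, I would pass to the nilshadow. Associated with the unimodular solvable group $S$ is a simply-connected nilpotent Lie group $N$ of the same dimension together with a diffeomorphism $S \to N$ under which left-invariant metrics on $S$ correspond to left-invariant metrics on $N$ that are invariant under a compact torus $T \subset \mathrm{Aut}(N)$ of semisimple parts. Unimodularity is precisely what guarantees that this correspondence is well-behaved, since no extra non-compact factor appears that would resist absorption into a compact orthogonal automorphism group.

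Third, I would invoke the nilpotent case. By the results cited in the introduction, $N$ is itself a maximal symmetry space, and if $N$ admits a nilsoliton then the nilsoliton realizes the maximal isometry group. Choosing a $T$-invariant metric on $N$ adapted to this maximal structure and pulling it back to $S$ via the nilshadow diffeomorphism yields the candidate maximal metric on $S$. When $S$ itself admits a Ricci soliton, this construction should return the solvsoliton, recovering the stronger statement announced in the abstract.

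The main obstacle will be that two different left-invariant metrics on $S$ can produce different modifications, hence different identifications of $S$ with its nilshadow. One must show that the isometry group of any such metric can be conjugated by a self-diffeomorphism of $S$ into the framework of the chosen metric. The unimodularity hypothesis is essential here: it should ensure that the relevant compact tori of automorphisms of $N$ are conjugate inside $\mathrm{Aut}(N)$, so that this normalization can always be achieved simultaneously with the passage through the nilshadow. Making this simultaneous normalization precise, and checking that the resulting conjugating map is genuinely a diffeomorphism of $S$ rather than merely of the underlying manifold, is the technical heart of the argument.
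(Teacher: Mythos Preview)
Your proposal contains a genuine gap: you conflate ``unimodular'' with ``type R.'' The nilshadow correspondence you describe --- left-invariant metrics on $S$ matching $T$-invariant left-invariant metrics on a nilpotent $N$, with $T$ a \emph{compact} torus --- holds only when every $\operatorname{ad}_X$ has purely imaginary spectrum. Unimodularity only says $\operatorname{tr}(\operatorname{ad}_X)=0$, which is strictly weaker. For example, $\mathrm{Sol}^3$ (with $[A,X]=X$, $[A,Y]=-Y$) is unimodular and even completely solvable, yet its nilshadow is the abelian group $\mathbb{R}^3$; no left-invariant metric on $\mathrm{Sol}^3$ is isometric to a flat metric, since the former has exponential volume growth. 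So the ``no extra non-compact factor appears'' claim is false, and the reduction to the nilpotent case collapses.

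The paper reduces instead to the \emph{completely solvable} case, which is exactly the right intermediate class: one strips only the compact (purely imaginary) part of each $\operatorname{ad}_X$, not the full semisimple part. Concretely, starting from any metric $g$ on $R$, one passes to the group $R''$ in standard position, enlarges the isometry group so that a maximal compact torus $M_K$ of $\mathrm{Aut}(R'')$ acts by isometries, and then sets $\mathfrak{s}=\{X-K_X : X\in\mathfrak{r}''\}$ where $K_X$ is the $M_K$-component of $\operatorname{ad} X$. This $\mathfrak{s}$ is completely solvable and acts transitively. The technical heart --- which you correctly anticipated as the main obstacle --- is showing that the resulting $S$ is independent of the starting metric up to $\mathrm{Aut}(R)$-conjugacy. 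The key new lemma is that \emph{any modification of a completely solvable algebra is automatically a normal modification}; from this one deduces that $\mathfrak{s}$ is always the unique maximal completely solvable ideal of $\mathfrak{r}\rtimes\mathfrak{k}$ for a suitable maximal compact $\mathfrak{k}\subset\operatorname{Der}(\mathfrak{r})$, and uniqueness of maximal compacts up to conjugacy finishes the argument. Your instinct about where the difficulty lies was right, but the target of the reduction must be completely solvable, not nilpotent.
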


\begin{cor}\label{cor: main cor on ricci solitons} Let $S$ be a simply-connected, unimodular solvable Lie group which admits a Ricci soliton metric.  Then said Ricci soliton has maximal symmetry among $S$-invariant metrics.
\end{cor}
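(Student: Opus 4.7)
The plan is to invoke Theorem \ref{thm: main result} to produce some metric of maximal symmetry and then use the algebraic rigidity of Ricci solitons on solvable Lie groups to identify that metric with the soliton metric, up to isometry.

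First, by Theorem \ref{thm: main result} there exists a left-invariant metric $g^\ast$ on $S$ of maximal symmetry. Applying Definition \ref{def:  max symm} with $g' = g_{sol}$, there is a diffeomorphism $\phi \in \mathfrak{Diff}(S)$ with
\[ Isom(S, g_{sol}) \subseteq \phi Isom(S, g^\ast) \phi^{-1} = Isom(S, \phi^\ast g^\ast). \]
Replacing $g^\ast$ by $\phi^\ast g^\ast$ (which is still of maximal symmetry), one may therefore assume from the outset that $Isom(S, g_{sol}) \subseteq Isom(S, g^\ast)$. The content of the corollary is that this inclusion is actually an equality, i.e.\ the maximally symmetric metric can be chosen to be $g_{sol}$ itself.

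Second, use the algebraic structure of $g_{sol}$. By the work of Lauret, and its subsequent extension by Jablonski (as mentioned around \cite{Jablo:ConceringExistenceOfEinstein}), any Ricci soliton on a simply-connected solvable Lie group is a solvsoliton: the Ricci endomorphism satisfies $\mathrm{Ric}_{g_{sol}} = c\,\mathrm{Id} + D$ for some $c \in \mathbb R$ and some $D \in \mathrm{Der}(\mathfrak s)$. The isotropy of $g_{sol}$ at $e$, viewed inside $\mathrm{Aut}(\mathfrak s)$ via the Gordon--Wilson identification, is then precisely the group of $g_{sol}$-orthogonal automorphisms commuting with $D$, and standard solvsoliton theory shows that this group is a maximal compact subgroup of $\mathrm{Aut}(\mathfrak s)$.

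The third and main step is to match $g^\ast$ with $g_{sol}$. The construction of $g^\ast$ in Theorem \ref{thm: main result} will realize the isotropy of $g^\ast$ as a maximal compact subgroup of $\mathrm{Aut}(\mathfrak s)$ acting orthogonally in a standard inner product on $\mathfrak s$. Since all maximal compact subgroups of $\mathrm{Aut}(\mathfrak s)$ are conjugate, one can conjugate (by an automorphism of $S$, which does not change the maximal symmetry property of $g^\ast$) so that the isotropy of $g^\ast$ and that of $g_{sol}$ coincide. The soliton equation $\mathrm{Ric} = c\,\mathrm{Id} + D$ is preserved by this common compact group, and its solution is unique up to scaling; this uniqueness forces $g^\ast$ to agree with $g_{sol}$ (up to scaling, which does not affect the isometry group).

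The principal obstacle is the unimodular non-nilpotent setting, where the relationship between the intrinsic extremality of $g^\ast$ and the extrinsic algebraic rigidity of $g_{sol}$ is more subtle than in the nilpotent case handled by Gordon--Wilson. To close the argument, one must ensure that any additional symmetry of $g^\ast$ must automatically commute with $D$ and preserve the soliton equation, and hence already lie in $Isom(S, g_{sol})$; this is expected to reduce to a statement about maximal tori of orthogonal automorphisms inside $\mathrm{Aut}(\mathfrak s)$ once the Gordon--Wilson modification has been used to put all relevant isometry groups into a common standard form.
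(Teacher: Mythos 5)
Your proposal has a genuine gap at its core. You assert that the Ricci soliton metric on a general simply-connected unimodular solvable $S$ satisfies $\mathrm{Ric}_{g_{sol}} = c\,\mathrm{Id} + D$ for a derivation $D$ of $\mathfrak s$ itself. That is only known for completely solvable groups. For a unimodular solvable group that is not completely solvable, the correct statement (Theorem 8.2 of \cite{Jablo:HomogeneousRicciSolitons}, which is what the paper actually uses) is that $(S,g_{sol})$ is \emph{isometric} to a solvsoliton on a possibly different transitive solvable group --- a modification of $S$ --- and it is precisely the paper's Lemma \ref{lemma: modification of completely solvable is normal modification} and the uniqueness theorem (Theorem \ref{thm: unique completely solv associated to any solv}) that identify this modification with the completely solvable group $S_g$ produced in the proof of Theorem \ref{thm: main result}. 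Without that reduction, your claim that the isotropy of $g_{sol}$ is a maximal compact subgroup of $\mathrm{Aut}(\mathfrak s)$ is exactly the statement to be proved, not something you can import from ``standard solvsoliton theory,'' which covers the completely solvable case only (Theorem 4.1 of \cite{Jablo:ConceringExistenceOfEinstein}).

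Your third step is also off target: you try to force $g^\ast = g_{sol}$ up to scaling by a uniqueness argument for the soliton equation. But $g^\ast$, as produced by Theorem \ref{thm: main result}, is merely some metric with maximal isometry group and has no reason to satisfy any soliton equation; moreover many distinct metrics share the same maximal isometry group, so equality of metrics is both false in general and unnecessary. What the corollary requires is that $Isom(S,g_{sol})$ is conjugate to the maximal isometry group, i.e.\ that it already contains a full maximal compact of automorphisms of the associated completely solvable group. The paper gets this by transporting the problem to the completely solvable modification, where the soliton's isometry group is known to be maximal, and then using the uniqueness of that modification to conclude it is the same maximal group as in Theorem \ref{thm: main result}. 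Your closing paragraph correctly senses that something must rule out ``extra'' symmetries of $g^\ast$ not seen by $g_{sol}$, but the proposed resolution is left as an expectation rather than an argument; that is where the proof is missing.
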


The strategy for proving both results is to reduce to the setting of completely solvable groups, where the answer is immediate.   To do this, we start with a solvable group $R$, we modify a given initial metric until we obtain a metric whose isometry group contains a transitive solvable $S$ which is completely solvable.  Our main contribution, then, is to prove a uniqueness result for which $S$ can appear; up to isomorphism only one can and does appear. This uniqueness result is a consequence of the following, which is of independent interest, see Lemma \ref{lemma: modification of completely solvable is normal modification}.  (Here we use the language of \cite{GordonWilson:IsomGrpsOfRiemSolv}.)

\begin{lemma*}
Any modification of a completely solvable group is necessarily a normal modification.
\end{lemma*}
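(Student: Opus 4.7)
The plan is to work directly from the explicit Gordon--Wilson parametrization of modifications and to isolate the algebraic feature of complete solvability that forces every such modification to be normal. Recall that, once one fixes a vector space complement $\mathfrak{a}$ to the nilradical $\mathfrak{n}$ in a solvable Lie algebra $\mathfrak{s}$, a modification is encoded by a linear map $\phi \colon \mathfrak{a} \to \mathfrak{n}$ satisfying a cocycle-type compatibility, and the resulting modified algebra $\mathfrak{s}_\phi$ has non-nilradical part equal to the graph $\{X + \phi(X) : X \in \mathfrak{a}\}$, with a twisted bracket. The modification is called normal precisely when this graph again lies inside $\mathfrak{s}$ itself, so that $\mathfrak{s}_\phi$ is produced merely by re-choosing the complement of $\mathfrak{n}$ within $\mathfrak{s}$ and thus sits as a normal subalgebra of the ambient structure used by Gordon--Wilson.

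First, I would formulate the obstruction to normality in terms of the Jordan decomposition. Fix a metric-adapted inner product on $\mathfrak{s}$ and split each $\mathrm{ad}(X)|_{\mathfrak{n}}$ for $X \in \mathfrak{a}$ into its symmetric and skew-symmetric parts, $D(X) + S(X)$. In the Gordon--Wilson picture, the non-normal piece of a modification corresponds precisely to a nontrivial skew-symmetric part: non-normal modifications exist exactly when one can absorb a nontrivial compact subgroup of $\overline{\mathrm{Ad}_{S}(\mathfrak{a})} \subset GL(\mathfrak{n})$ into the complement, and this compact factor is measured by $S(X)$.

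Next I would invoke complete solvability: by definition every $\mathrm{ad}(X)$ with $X \in \mathfrak{s}$ has only real eigenvalues, and it is standard that for a suitably adapted inner product on $\mathfrak{n}$ the skew-symmetric part $S(X)$ vanishes identically; equivalently, the closure $\overline{\mathrm{Ad}_{S}(\mathfrak{a})}$ has no compact factor. Consequently the skew-symmetric obstruction is zero, the graph of $\phi$ is forced to lie inside $\mathfrak{s}$, and the modification is normal.

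The main technical obstacle is ensuring that the intrinsic eigenvalue condition supplied by complete solvability matches Gordon--Wilson's definition of \emph{normal} in the right metric, independently of the initial inner product one starts with; care is also needed to verify that the cocycle condition on $\phi$ interacts compatibly with the Jordan decomposition, so that killing the skew-symmetric part really does leave no non-normal modification data behind.
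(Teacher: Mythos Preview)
Your proposal rests on a mischaracterization of the Gordon--Wilson notion of modification. In their framework the modification map is a linear map $\phi\colon\mathfrak{s}\to N_{\mathfrak l}(\mathfrak s)$ into the normalizer of $\mathfrak s$ inside the \emph{isotropy} subalgebra $\mathfrak l$ of the isometry algebra, and the modified algebra is $\mathfrak r=\{X+\phi(X):X\in\mathfrak s\}$; the map does not go from a complement $\mathfrak a$ of the nilradical into $\mathfrak n$. Accordingly, ``normal'' does not mean that the graph of $\phi$ lies back inside $\mathfrak s$; by their Proposition~2.4 it amounts to conditions such as $[\mathfrak s,\mathfrak s]\subset\operatorname{Ker}\phi$. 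Your whole argument is organized around the wrong target space and the wrong normality criterion, so it does not engage the actual statement.

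There is also a separate technical error: the claim that for a completely solvable algebra and some adapted inner product the skew-symmetric part $S(X)$ of $\mathrm{ad}(X)|_{\mathfrak n}$ vanishes is false. Complete solvability gives real eigenvalues, not symmetry; any nonzero nilpotent $\mathrm{ad}(X)$ has nonzero skew part in every inner product. The correct kernel of your intuition is that $\overline{\mathrm{Ad}_S}$ has no compact torus factor, but this by itself does not dispose of all modification data, since the relevant skew-symmetric operators come from $N_{\mathfrak l}(\mathfrak s)\subset\operatorname{Der}(\mathfrak s)$, not from the skew parts of inner $\mathrm{ad}$'s. The paper's proof instead shows $[\mathfrak s,\mathfrak s]\subset\operatorname{Ker}\phi$ directly: it splits $\mathfrak s=\mathfrak a+\mathfrak{n(s)}$ with $\mathfrak a$ annihilated by $N_{\mathfrak l}(\mathfrak s)$, handles $[\mathfrak a,\mathfrak a]$ and $[\mathfrak{n(s)},\mathfrak{n(s)}]$ by general modification properties, and for $[\mathfrak a,\mathfrak{n(s)}]$ carries out an eigenspace analysis of $\phi(X)+\mathrm{ad}\,X$ on $\mathfrak{n(s)}$, exploiting that $\phi(X)$ has purely imaginary eigenvalues while $\mathrm{ad}\,X$ has real ones---this is where complete solvability genuinely enters.
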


It seems noteworthy to point out that our work actually shows that any solvable Lie group is associated to a unique completely solvable group (Theorem \ref{thm: unique completely solv associated to any solv}) in the same way that type R groups have a well-defined, unique nilshadow, cf.\ \cite{Auslander-Green:G-inducedFlows}.

In the last section we give a concrete description of the completely solvable group associated to any solvable group $S$ in terms of $S$ and the derivations of its Lie algebra.

Finally, we observe that  the  choices made throughout our process allow us to choose our diffeomorphism $\phi$ from Definition \ref{def:  max symm} to be a composition of an automorphism of $S$ together with an automorphism of its associated completely solvable group. In the case that $S$ is completely solvable, the diffeomorphism which conjugates the isometry groups can be chosen to be an automorphism.  It would be interested to know whether or not this is true in general.

\section{Preliminaries}\label{sec: preimlin}

In this section, we recall the basics on isometry groups for (unimodular) solvmanifolds from the foundational work of Gordon and Wilson \cite{GordonWilson:IsomGrpsOfRiemSolv}.  Throughout, our standing assumption is that our solvable groups are simply-connected.  We begin with a general result for Lie algebras.

Recall that every Lie algebra has a unique, maximal  solvable ideal, called the radical.  A (solvable) Lie algebra $\mathfrak g$  is called completely solvable if $ad_X: \mathfrak g \to \mathfrak g$ has only real eigenvalues for all $X\in\mathfrak g$.  We have the following.

\begin{prop}\label{prop: max comp solv ideal} Given any Lie algebra $\mathfrak g$ there exists a unique maximal ideal  $\mathfrak s$ which is  completely solvable.
\end{prop}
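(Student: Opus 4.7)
The plan is to prove the proposition in two stages: first exhibit a completely solvable ideal of maximal dimension, then upgrade this to genuine uniqueness via the key fact that the sum of two completely solvable ideals is again completely solvable.

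For existence, note that every completely solvable ideal of $\mathfrak{g}$ is solvable and hence contained in the radical $\mathfrak{r}$. Since $\dim \mathfrak{r} < \infty$, there is a completely solvable ideal $\mathfrak{s}$ of maximal dimension (the zero ideal is a trivial starting point). Uniqueness then follows from the key claim: given it, if $\mathfrak{s}'$ is any other completely solvable ideal, then $\mathfrak{s} + \mathfrak{s}'$ is also a completely solvable ideal, forcing $\mathfrak{s}' \subset \mathfrak{s}$ by maximality of $\dim \mathfrak{s}$.

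To prove the key claim, fix completely solvable ideals $\mathfrak{s}_1, \mathfrak{s}_2 \subset \mathfrak{g}$, set $\mathfrak{h} := \mathfrak{s}_1 + \mathfrak{s}_2$, and take $X = X_1 + X_2 \in \mathfrak{h}$ with $X_i \in \mathfrak{s}_i$. I would analyze $ad_X$ on $\mathfrak{h}$ through the $\mathfrak{g}$-ideal filtration
\[
0 \;\subset\; \mathfrak{s}_1 \cap \mathfrak{s}_2 \;\subset\; \mathfrak{s}_1 \;\subset\; \mathfrak{h}.
\]
The crucial observation is that because both summands are ideals, $[\mathfrak{s}_1, \mathfrak{s}_2] \subset \mathfrak{s}_1 \cap \mathfrak{s}_2$. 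Consequently $ad_{X_1}$ induces zero on $\mathfrak{h}/\mathfrak{s}_1 \cong \mathfrak{s}_2/(\mathfrak{s}_1 \cap \mathfrak{s}_2)$, so $ad_X$ there coincides with $ad_{X_2}$, whose eigenvalues are real by complete solvability of $\mathfrak{s}_2$. Symmetrically, $ad_X$ acts as $ad_{X_1}$ with real eigenvalues on $\mathfrak{s}_1/(\mathfrak{s}_1 \cap \mathfrak{s}_2)$.

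The main obstacle is the innermost piece $\mathfrak{s}_1 \cap \mathfrak{s}_2$, where $ad_{X_1}$ and $ad_{X_2}$ both contribute nontrivially and bare linear algebra fails (a sum of two real-spectrum operators can acquire imaginary eigenvalues). To handle this, I would pass to the real algebraic hull of $Ad(H)$ inside $GL(\mathfrak{h})$ and exploit that complete solvability of each $\mathfrak{s}_i$ forces the anisotropic (compact) torus factor in the hull of $Ad(S_i)$ to be trivial; hence the semisimple part of each $ad_{X_i}$ has real eigenvalues. A weight-space analysis with respect to a Cartan subalgebra of $\mathfrak{h}$ then rules out any purely imaginary weight appearing in the combined action on $\mathfrak{s}_1 \cap \mathfrak{s}_2$. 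This intersection step is the technical heart of the argument; the filtration analysis above then packages the conclusion.
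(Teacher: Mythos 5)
Your overall architecture is the same as the paper's: reduce to the radical, take a completely solvable ideal of maximal dimension, and derive uniqueness from the claim that the sum of two completely solvable ideals is again completely solvable. You also correctly isolate the genuine difficulty --- on $\mathfrak{s}_1\cap\mathfrak{s}_2$ both $ad_{X_1}$ and $ad_{X_2}$ act nontrivially, and two operators with real spectrum can sum to one with non-real spectrum. (The three-step filtration is harmless but unnecessary: whatever argument handles the intersection handles all of $\mathfrak{h}$ at once, since for an ideal the passage from $\mathfrak{s}_i$ to $\mathfrak{h}$ only introduces zero eigenvalues.)

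The gap is in your resolution of that crux. From ``the semisimple part of each $ad_{X_i}$ has real eigenvalues'' nothing yet follows about $ad_{X_1}+ad_{X_2}$: the Jordan-semisimple parts of $ad_{X_1}$ and $ad_{X_2}$ need not commute, so their eigenvalues do not simply add, and the ``weight-space analysis'' you invoke presupposes exactly the additivity you are trying to prove. What closes the argument (and is what the paper does) is two further facts. First, place $ad\,\mathfrak g$ inside $rad(Der(\mathfrak g))$ and use Mostow's splitting $rad(Der(\mathfrak g))=\mathfrak m\ltimes\mathfrak n$ with $\mathfrak m$ reductive and \emph{abelian}; writing $ad\,X=M_X+N_X$ with $M_X\in\mathfrak m$, $N_X\in\mathfrak n$, the components $M_X$ all lie in one abelian subalgebra, so $M_{X_1}$ and $M_{X_2}$ commute and the eigenvalues of $M_{X_1}+M_{X_2}$ are sums of their (real) eigenvalues. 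Second, one needs that the eigenvalues of $ad\,X$ coincide with those of $M_X$ even though $M_X$ and $N_X$ need not commute --- this is the ``strong Lie's theorem'' the paper cites. Equivalently and more elementarily: by Lie's theorem the complexified adjoint representation of the solvable algebra is simultaneously upper-triangularizable, so the eigenvalues of $ad\,X$ are values $\lambda_j(X)$ of \emph{linear} weights $\lambda_j$, and linearity gives $\lambda_j(X_1+X_2)=\lambda_j(X_1)+\lambda_j(X_2)\in\mathbb R$. Either of these would turn your sketch into a proof; as written, the decisive implication is asserted rather than established.
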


\begin{remark}  This completely solvable ideal is contained in the radical, but generally they are not equal.   Notice that the nilradical of $\mathfrak g$ is contained in $\mathfrak s$ and so, as with the radical, $\mathfrak s$ is trivial precisely when $\mathfrak g$ is semi-simple.
\end{remark}

\begin{proof}[Proof of proposition]
As any solvable ideal is a subalgebra of the radical of $\mathfrak g$, it suffices to prove the result in the special case that $\mathfrak g$ is solvable.  The result follows upon showing that the sum of two such ideals is again an ideal of the same type.  As the sum of ideals is again an ideal, we only need to check the condition of complete solvability.

Let $\mathfrak g$ be a solvable Lie algebra and $\mathfrak s_1, \mathfrak s_2$ completely solvable ideals of $\mathfrak g$.  We will show that $\mathfrak s_1 + \mathfrak s_2$ is again completely solvable.  Observe that for any ideal $\mathfrak s$, the eigenvalues of $ad~X|_\mathfrak s$ are real if and only if the eigenvalues of $ad~X|_\mathfrak g$ are real, as we have only introduced extra zero eigenvalues.

Consider $ad~\mathfrak g \subset Der(\mathfrak g)$.  As $ad~\mathfrak g$ is a solvable ideal, it is contained in $rad(Der(\mathfrak g))$, the radical of $Der(\mathfrak g)$.  As $rad(Der(\mathfrak g))$ is an algebraic Lie algebra, it may be decomposed as $rad(Der(\mathfrak g)) = \mathfrak m \ltimes \mathfrak n$, where $\mathfrak m$ is a reductive subalgebra (and hence abelian) and $\mathfrak n$ is the nilradical (see \cite{Mostow:FullyReducibleSubgrpsOfAlgGrps}).  Thus, for $X\in\mathfrak g$, we have
	$$ad~X = M_X + N_X, \mbox{ where } M_X\in \mathfrak m, N_X\in \mathfrak n$$
Applying a strong version of Lie's Theorem (cf. \cite[Section 2]{Jablo:StronglySolvable}), we see that the eigenvalues of $ad~X$ are precisely the eigenvalues of $M_X$.

Now take $X_1\in\mathfrak s_1$ and $X_2\in \mathfrak s_2$.  Decomposing $ad~X_1 = M_1 + N_1$ and $ad~X_2 = M_2+N_2$ as above, we have
	$$ad~(X_1+X_2) = (M_1+M_2) + (N_1+N_2)$$
As $\mathfrak m$ is abelian, we see that the eigenvalues of $ad~(X_1+X_2)$ are the sums of eigenvalues coming from $M_1$ and $M_2$ and thus $\mathfrak s_1+\mathfrak s_2$ is completely solvable.

\end{proof}

\subsection{Isometry groups and modifications}
In the work \cite{GordonWilson:IsomGrpsOfRiemSolv}, Gordon and Wilson set about the job of giving a description of the full isometry group of any solvmanifold.  Given any Lie group $R$ with left-invariant metric, one can build a group of isometries as follows:  let $C$ denote the set of orthogonal automorphisms of $R$, then $R\rtimes C$ is a subgroup of the isometry group.  We call this group the algebraic isometry group and denote it by $AlgIsom(R,g)$.

For $R$ nilpotent, this gives the full isometry group \cite[Cor.~4.4]{GordonWilson:IsomGrpsOfRiemSolv}.  However, in general, the isometry group $Isom(R,g)$ will be much more.  A good example of this is to look at a symmetric space of non-compact type.

So to understand the general setting, Gordon and Wilson detail   a  process of modifying the initial solvable group $R$ to one with a `better' presentation $R'$  called a standard modification of $R$ - this is another solvable group of isometries which acts transitively.  The modification process ends after (at most) two normal modifications with the solvable group $R''$ in so-called standard position.  See Section 3, loc.~cit., for details.

To illustrate why this process is nice, we present the following result in the case of unimodular, solvable Lie groups.

\begin{lemma}\label{lemma: isom group of unimodular solv}  Let $R$ be a unimodular solvable Lie group with left-invariant metric $g$.  Then
  $Isom(R,g) = AlgIsom(R'',g) = C \ltimes R''$ where $R''$ is the solvable group in standard position inside $Isom(R,g)$ and $C$ consists of orthogonal automorphisms of $\mathfrak r''$.
\end{lemma}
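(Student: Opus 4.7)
The plan is to apply the Gordon--Wilson modification machinery to push $(R,g)$ into standard position and then exploit the unimodularity hypothesis to identify the full isometry group with the algebraic one.

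First, apply the modification procedure of \cite{GordonWilson:IsomGrpsOfRiemSolv} inside $G := Isom(R,g)$. Starting from $R \subset G$, perform at most two normal modifications to obtain a closed, simply transitive solvable subgroup $R'' \subset G$ sitting in standard position. By construction there is a base point $p \in R$ such that $R''\cdot p = R$ and the isotropy $K := G_p$ normalizes $R''$. Consequently the identity component satisfies $G_0 = K_0 \cdot R''$, and a dimension count together with simple transitivity of $R''$ yields $G = K \cdot R''$ with $K\cap R'' = \{1\}$, i.e.\ $G = K \ltimes R''$.

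Next, verify that $R''$ is unimodular. Because $R$ is unimodular, the homogeneous space $G/K$ admits a $G$-invariant volume; since $K$ is compact, $G$ itself is unimodular, and hence so is its closed simply transitive solvable subgroup $R''$. (Equivalently, one checks directly from the Gordon--Wilson construction that a normal modification alters neither the nilradical nor the trace of the adjoint action, so unimodularity is preserved step by step.)

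Finally, identify $K$ with a subgroup of the orthogonal automorphisms of $\mathfrak r''$. Since $K$ is compact and normalizes $R''$, it acts on $\mathfrak r''$ by a compact group of Lie algebra automorphisms. These automorphisms must preserve the restriction of $g$ to $\mathfrak r'' \cong T_p R$, so $K \hookrightarrow C := O(\mathfrak r'',g)\cap \mathrm{Aut}(\mathfrak r'')$. Therefore
\[
 Isom(R,g) \;=\; K \ltimes R'' \;\subset\; C \ltimes R'' \;=\; AlgIsom(R'',g),
\]
while the reverse inclusion is automatic because every element of $C\ltimes R''$ acts on $R''$ by an isometry of $g$. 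This proves $Isom(R,g) = AlgIsom(R'',g) = C\ltimes R''$.

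The main obstacle is the final step: showing that the isotropy $K$ really lands in $\mathrm{Aut}(\mathfrak r'')$ and not merely in some larger group of modified (i.e.\ twisted by a derivation valued cocycle) automorphisms, as can happen for non-unimodular solvmanifolds. Here the argument hinges on standard position together with the unimodularity of both $G$ and $R''$, which together force the cocycle measuring the failure of $K$ to act by honest automorphisms to be trivial; this is precisely the content hidden in Gordon--Wilson's analysis of the unimodular case.
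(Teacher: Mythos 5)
Your overall route is the same as the paper's: the paper proves this lemma simply by citing two facts from Gordon--Wilson (their Theorems 4.2 and 4.3), namely that (a) if one transitive solvable group of isometries is unimodular then all are, and (b) for a solvable, unimodular group in standard position the full isometry group equals the algebraic isometry group. Your proof is structured around exactly these two points, but the arguments you supply for them contain genuine errors.

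First, you assert ``by construction \dots the isotropy $K := G_p$ normalizes $R''$.'' This is not part of the construction of standard position; it is precisely the conclusion of Gordon--Wilson's theorem for the unimodular case, and it is false without unimodularity: for $\mathbb H^n = AN \cdot p \subset O(n,1)/O(n)$ the group $AN$ is in standard position but the isotropy $O(n)$ does not normalize $AN$. You do acknowledge at the end that this is ``the main obstacle'' and attribute it to Gordon--Wilson, but that makes the earlier ``by construction'' claim circular; once normalization is granted, the rest of your final step (that $K$ acts by orthogonal automorphisms of $\mathfrak r''$) is immediate, so the entire content of the lemma sits in the step you have deferred.

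Second, your unimodularity argument for $R''$ does not work. Every Riemannian homogeneous space $G/K$ carries a $G$-invariant volume, so that condition only tells you $\Delta_G|_K = 1$, which is automatic for compact $K$ and says nothing about $\Delta_G$ globally; the full isometry group of a homogeneous space need not be unimodular. Moreover, even when $G$ \emph{is} unimodular, a closed simply transitive solvable subgroup need not be: $O(n,1)$ is unimodular while $AN$ is not. The correct statement is the one in your parenthetical and in the paper's Proposition: unimodularity of one transitive solvable group of isometries forces unimodularity of all of them (Gordon--Wilson, Theorem 4.2). So the honest version of your proof is the paper's: quote the two Gordon--Wilson theorems; the interpolating arguments you offer in their place should be deleted or replaced by those citations.
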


This follows from the following  facts proven in Theorem 4.2 and 4.3 of \cite{GordonWilson:IsomGrpsOfRiemSolv}.

\begin{prop} If there is one transitive solvable Lie group of isometries which is unimodular, then all transitive solvable groups of isometries are unimodular. 
\end{prop}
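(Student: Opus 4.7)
My approach is to translate unimodularity of a transitive solvable subgroup into a Lie-algebra trace condition on a distinguished nilpotent ideal, and then to argue that this condition depends only on $(M,g)$. Let $G = \mathrm{Isom}(M,g)^0$, let $K$ be the compact isotropy at a chosen basepoint, and write $\mathfrak{g}, \mathfrak{k}$ for the corresponding Lie algebras. A transitive solvable subgroup $S$ corresponds to a subalgebra $\mathfrak{s} \subseteq \mathfrak{g}$ satisfying $\mathfrak{s} + \mathfrak{k} = \mathfrak{g}$. Since $[\mathfrak{s}, \mathfrak{s}]$ is contained in the nilradical $\mathfrak{n}_\mathfrak{s}$, the operator $\mathrm{ad}_X$ acts trivially on the abelian quotient $\mathfrak{s}/\mathfrak{n}_\mathfrak{s}$, so unimodularity of $S$ is equivalent to the trace condition $\mathrm{tr}(\mathrm{ad}_X|_{\mathfrak{n}_\mathfrak{s}}) = 0$ for every $X \in \mathfrak{s}$.

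The heart of the argument is the claim that the nilradical $\mathfrak{n}_\mathfrak{s}$ is in fact a \emph{canonical} ideal $\mathfrak{n} \subseteq \mathfrak{g}$, independent of the choice of transitive solvable subalgebra. For this I would appeal to the Gordon--Wilson structure theory cited above (Theorems 4.2 and 4.3 of their paper): any transitive solvable subgroup can be brought to standard position by a modification that preserves its nilradical, and the standard-position group has nilradical determined by the Riemannian data on $M$ alone. Granting this, $\mathfrak{n}_{\mathfrak{s}_1} = \mathfrak{n}_{\mathfrak{s}_2} = \mathfrak{n}$ for any two transitive solvable subalgebras $\mathfrak{s}_1, \mathfrak{s}_2$, and moreover $\mathfrak{n}$ is normalized by $\mathfrak{k}$.

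Once the canonicity of $\mathfrak{n}$ is in hand, the conclusion drops out quickly. Assume $S_1$ is unimodular, so $\mathrm{tr}(\mathrm{ad}_X|_\mathfrak{n}) = 0$ for every $X \in \mathfrak{s}_1$. Given $Y \in \mathfrak{s}_2$, decompose $Y = X + T$ with $X \in \mathfrak{s}_1$ and $T \in \mathfrak{k}$, using $\mathfrak{g} = \mathfrak{s}_1 + \mathfrak{k}$. Compactness of $K$ together with $K$-invariance of $\mathfrak{n}$ forces $\mathrm{ad}_T|_\mathfrak{n}$ to be skew-symmetric with respect to a $K$-invariant inner product on $\mathfrak{n}$, and hence trace-free. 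Therefore $\mathrm{tr}(\mathrm{ad}_Y|_\mathfrak{n}) = \mathrm{tr}(\mathrm{ad}_X|_\mathfrak{n}) + \mathrm{tr}(\mathrm{ad}_T|_\mathfrak{n}) = 0$, and $S_2$ is unimodular.

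The main obstacle is establishing the canonicity of the nilradical across all transitive solvable subgroups. This requires careful bookkeeping in the Gordon--Wilson modification process: one must verify both that modification preserves the nilradical of a transitive solvable subalgebra and that two different such subalgebras modify to groups whose nilradicals coincide as ideals of $\mathfrak{g}$. Once that structural input is secured, the trace manipulation above is routine.
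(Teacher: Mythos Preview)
The paper does not supply its own proof of this proposition; it simply records it as a fact established in Theorems~4.2 and~4.3 of Gordon--Wilson \cite{GordonWilson:IsomGrpsOfRiemSolv}. There is therefore no in-paper argument to compare your sketch against.

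Your outline is a reasonable reconstruction of the underlying mechanism, and the trace manipulation in the final paragraph is clean and correct once the structural input is granted. You have also correctly located the crux: the claim that every transitive solvable subalgebra of $\mathfrak g$ has the \emph{same} nilradical $\mathfrak n$, an ideal normalized by $\mathfrak k$. That is indeed the heart of the matter, and it is essentially what the cited Gordon--Wilson theorems establish; so your proposal does not avoid the citation so much as unpack part of it.

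One caution about the bookkeeping you flag. The assertion that ``modification preserves the nilradical'' is not automatic. For a general modification $\mathfrak r' = (\mathrm{id}+\phi)\mathfrak s$ with $\phi:\mathfrak s \to N_l(\mathfrak s)$, one has $\mathfrak n_{\mathfrak s}\subset \mathfrak r'$ only when $\mathfrak n_{\mathfrak s}\subset \ker\phi$; even for normal modifications one is guaranteed only $[\mathfrak s,\mathfrak s]\subset\ker\phi$, which can be strictly smaller than $\mathfrak n_{\mathfrak s}$. Gordon--Wilson obtain the invariance of the nilradical by tying it to Riemannian data on $(M,g)$ rather than by a purely algebraic modification argument, so the ``careful bookkeeping'' you anticipate is genuinely the substantive part and cannot be dispatched by the modification formalism alone.
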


\begin{prop}\label{prop:  compl solv are in std position}    If $R$ is solvable,  unimodular, and in standard position, then the isometry group is the algebraic isometry group.  
\end{prop}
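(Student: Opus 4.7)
The plan is to show that the full isotropy at the identity coincides with $C = Aut(\mathfrak{r}) \cap O(\mathfrak{r}, g)$; combined with the transitivity of $R$, this gives $Isom(R,g) = C \ltimes R = AlgIsom(R,g)$. So let $\sigma$ be any isometry with $\sigma(e)=e$; I need to prove that $\sigma_* := d\sigma_e \in O(\mathfrak r, g)$ is a Lie algebra automorphism of $\mathfrak{r}$. Orthogonality is automatic from $\sigma$ being an isometry, so the content is the automorphism property.

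First, I would exploit the fact that $\sigma R \sigma^{-1}$ is again a transitive solvable group of isometries, so by Gordon--Wilson's modification theory it is obtained from $R$ by a sequence of (at most two) standard modifications. The hypothesis that $R$ is already in standard position should rigidify this: any further standard modification would either reproduce $R$ or violate the unimodularity established by the previous proposition (which says unimodularity is an invariant of the class of transitive solvable subgroups). Hence $\sigma R \sigma^{-1} = R$, which means $\sigma$ normalizes $R$, and so $\sigma_* \in Aut(\mathfrak{r})$.

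To carry this out concretely, I would use the internal characterization of $R$ inside $Isom(R,g)$ afforded by standard position. Writing $\mathfrak r = \mathfrak a \oplus \mathfrak n$ orthogonally with $\mathfrak n$ the nilradical and $\mathfrak a$ the complementary abelian piece along which the operators $ad(A)|_{\mathfrak n}$ are symmetric (the defining feature of standard position for the unimodular case), the nilradical is characterized metrically and representation-theoretically: it is invariant under every isometry fixing $e$, because the isotropy preserves Ricci, hence the generalized nullspace of symmetric operators in $ad(\mathfrak r)$. This preservation, combined with Wilson's theorem that isometries of a nilpotent Lie group fixing $e$ are Lie algebra automorphisms of its nilradical, yields $\sigma_*|_{\mathfrak n} \in Aut(\mathfrak n)$. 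Unimodularity enters here because $[\mathfrak r, \mathfrak r] \subset \mathfrak n$ for unimodular solvable Lie algebras, so controlling $\sigma_*$ on $\mathfrak n$ already controls most of the bracket.

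The remaining and most delicate step is to propagate the automorphism property from $\mathfrak n$ to all of $\mathfrak r$, i.e., to show $\sigma_*[A, X] = [\sigma_* A, \sigma_* X]$ for $A \in \mathfrak a$ and $X \in \mathfrak n$. Here I would use the standard position property that $ad(A)|_{\mathfrak n}$ is symmetric, together with the fact that symmetric operators are uniquely determined by their eigenspace decomposition and that $\sigma_*$ intertwines $ad(A)$ with $ad(\sigma_* A)$ on $\mathfrak n$ (since both are the symmetric parts of the corresponding $Ric$-commuting operators). This is the main obstacle: extracting the precise algebraic identity from the joint rigidity of symmetry under the metric plus preservation of the nilradical. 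Once established, $\sigma_* \in Aut(\mathfrak r) \cap O(\mathfrak r, g) = C$, completing the proof.
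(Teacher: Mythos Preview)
The paper does not give its own proof of this proposition; it simply records it as a fact proven in Theorems~4.2 and~4.3 of Gordon--Wilson \cite{GordonWilson:IsomGrpsOfRiemSolv}. So there is nothing to compare against except whether your sketch stands on its own.

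There is a genuine gap in your first approach. You claim $\sigma R\sigma^{-1}=R$ because standard position should pin $R$ down uniquely, and any other outcome would ``violate unimodularity.'' But unimodularity cannot be violated: the preceding proposition says exactly that every transitive solvable subgroup is unimodular once one of them is, so no modification is ever excluded on those grounds. More seriously, standard position is \emph{not} unique in the sense you need. Take $R=AN$ acting simply transitively on real hyperbolic space $\mathbb H^n$: this $R$ is completely solvable, hence in standard position by the remark following the proposition, yet it is certainly not normal in $Isom(\mathbb H^n)_0\simeq SO_0(n,1)$, which is simple. Conjugates of $R$ by elements of $K$ give other completely solvable transitive groups, all in standard position. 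What fails in this example is precisely unimodularity, and in Gordon--Wilson's actual argument unimodularity enters through the structure of the isometry algebra (their Theorem~4.3), not as an obstruction to modification. Your sketch never locates where the hypothesis is genuinely used.

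Your ``concrete'' approach rests on a mischaracterization: standard position in the Gordon--Wilson sense does \emph{not} mean that $ad(A)|_{\mathfrak n}$ is symmetric. That is the ``standard'' condition from the Einstein-solvmanifold literature (Heber), an entirely different notion. Gordon--Wilson's standard position is defined through the modification process and carries no such metric symmetry of $ad$, so the Ricci/eigenspace argument you outline lacks its main hypothesis. (Incidentally, $[\mathfrak r,\mathfrak r]\subset\mathfrak n$ holds for every solvable Lie algebra, unimodular or not, so that step does not use the hypothesis either.)
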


\begin{remark}  Completely solvable groups are always in standard position.
\end{remark}

Regarding normal modifications, we record the following useful facts here.

\begin{lemma}\label{lemma: norman modification goes both ways}   For solvable Lie groups $R$ and $S$ in a common isometry group, $R$ being a normal modification of $S$ implies $S$ is a normal modification of $R$.
\end{lemma}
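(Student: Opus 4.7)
Since both $R$ and $S$ are transitive solvable subgroups of the common isometry group $I$, their Lie algebras $\mathfrak{r}$ and $\mathfrak{s}$ are both complementary to the isotropy subalgebra $\mathfrak{k}$ in $\mathfrak{i} := \operatorname{Lie}(I)$, once a base point is chosen. Consequently there is a unique linear map $\phi \colon \mathfrak{s} \to \mathfrak{k}$ with
\[
  \mathfrak{r} = \{\, X + \phi(X) : X \in \mathfrak{s} \,\},
\]
and the natural candidate for a reverse modification datum is $\psi \colon \mathfrak{r} \to \mathfrak{k}$ defined by $\psi(X + \phi(X)) := -\phi(X)$, which gives $\mathfrak{s} = \{\, Y + \psi(Y) : Y \in \mathfrak{r}\,\}$. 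Thus $S$ is automatically \emph{some} modification of $R$; the actual content of the lemma is that this reverse modification is normal.

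First I would do the bookkeeping: $\psi$ is well-defined and linear, and under the identifications above $\ker \psi = \mathfrak{s} \cap \mathfrak{r} = \ker \phi$, a subspace I will call $\mathfrak{a}$. The hypothesis that $R$ is a normal modification of $S$ says (among other things) that $\mathfrak{a}$ is an ideal of $\mathfrak{s}$; to conclude that $S$ is a normal modification of $R$, the task is symmetric: show $\mathfrak{a}$ is an ideal of $\mathfrak{r}$.

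The heart of the argument is to extract from the Gordon--Wilson definition of \emph{normal modification} the structural information that the image $\phi(\mathfrak{s}) \subset \mathfrak{k}$ commutes with $\mathfrak{a}$ in $\mathfrak{i}$ (equivalently, acts trivially on $\mathfrak{a}$). Granted this, for any $A \in \mathfrak{a}$ and $Y = X + \phi(X) \in \mathfrak{r}$ one computes
\[
  [Y, A] \;=\; [X, A] + [\phi(X), A] \;=\; [X, A] \;\in\; \mathfrak{a},
\]
using that $\mathfrak{a}$ is an ideal of $\mathfrak{s}$ for the first term and the commutativity property for the second. So $\mathfrak{a}$ is an ideal of $\mathfrak{r}$. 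Moreover, $\psi(\mathfrak{r}) = -\phi(\mathfrak{s})$ is literally the same subspace of $\mathfrak{k}$ as $\phi(\mathfrak{s})$, so it inherits exactly the same commutativity with $\mathfrak{a}$; every structural condition required of $\psi$ to define a normal modification is inherited verbatim from $\phi$.

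The main obstacle is the middle step: pinning down the precise algebraic condition in the Gordon--Wilson notion of ``normal modification'' that forces $[\phi(X), A]$ into $\mathfrak{a}$ (or makes it vanish). Once that condition is isolated, the visible symmetry between $\phi$ and $-\phi$, together with the simple bracket computation above, makes the reverse direction automatic, and the lemma follows.
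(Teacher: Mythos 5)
Your skeleton is the right one --- the reverse map $\psi(X+\phi(X)) = -\phi(X)$, the identification $\ker\psi = \ker\phi = \mathfrak r\cap\mathfrak s$, and the observation that $\psi(\mathfrak r)=\phi(\mathfrak s)$ as subspaces of the isotropy algebra --- and the paper itself disposes of this lemma by simply citing Proposition 2.4 of Gordon--Wilson. But your proposal stops exactly at the step that carries the content, and the condition you guess at is not the right one. ``Normal modification'' in Gordon--Wilson is not the statement that $\mathfrak a = \mathfrak r\cap\mathfrak s$ is an ideal; it is the statement that the modification map takes values in the normalizer of the algebra being modified (equivalently, in the formulation used elsewhere in this paper, that $\phi$ lands in $N_{\mathfrak l}(\mathfrak s)$ and $[\mathfrak s,\mathfrak s]\subset\ker\phi$). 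So what you must verify for the reverse direction is that $\psi(\mathfrak r)=\phi(\mathfrak s)$ normalizes $\mathfrak r$ and that $[\mathfrak r,\mathfrak r]\subset\ker\psi$ --- not that $\mathfrak a$ is an ideal of $\mathfrak r$. Moreover, the input you posit, namely that $\phi(\mathfrak s)$ \emph{centralizes} $\mathfrak a$, is stronger than what Proposition 2.4 actually provides; what it provides is that $\phi(\mathfrak s)$ is abelian and that $[\phi(\mathfrak s),\mathfrak s]\subset\ker\phi$.

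With the correct target and the correct input, the argument closes in two lines, which is what makes the lemma ``immediate'': since $\phi(\mathfrak s)$ is abelian and $\mathfrak r=(id+\phi)\mathfrak s$,
\[
[\phi(\mathfrak s),\mathfrak r]\subset[\phi(\mathfrak s),\mathfrak s]+[\phi(\mathfrak s),\phi(\mathfrak s)]
=[\phi(\mathfrak s),\mathfrak s]\subset\ker\phi=\mathfrak r\cap\mathfrak s\subset\mathfrak r,
\]
so $\psi(\mathfrak r)=\phi(\mathfrak s)\subset N_{\mathfrak l}(\mathfrak r)$; and
\[
[\mathfrak r,\mathfrak r]\subset[\mathfrak s,\mathfrak s]+[\phi(\mathfrak s),\mathfrak s]\subset\ker\phi=\ker\psi .
\]
As written, your plan would leave a referee unable to check normality of the reverse modification, and the bracket computation you do carry out ($[Y,A]\in\mathfrak a$) establishes a different, insufficient conclusion. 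The fix is to state the Gordon--Wilson definition and properties explicitly and then run the displayed computation.
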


This follows immediately from the description of normal modifications given in Proposition 2.4 of \cite{GordonWilson:IsomGrpsOfRiemSolv}.  This will be used in the sequel when $S$ is completely solvable.  Such $S$ are in standard position in the isometry group and any modification $R$ is a normal modification (see Lemma \ref{lemma: modification of completely solvable is normal modification}), so we see that there exists an abelian subalgebra $\mathfrak t$ of the stabilizer subalgebra which normalizes both $\mathfrak r$ and $\mathfrak s$ such that $\mathfrak s \subset \mathfrak r \rtimes \mathfrak t$ and $\mathfrak r \subset \mathfrak s \rtimes \mathfrak t$, cf.~Theorem 3.1 of \cite{GordonWilson:IsomGrpsOfRiemSolv}.  As such, we have the following.

\begin{lemma}\label{lemma: s is an ideal in r rtimes N_l(r)}   For $\mathfrak s$ a completely solvable algebra in the isometry algebra and $\mathfrak r$ a modification of $\mathfrak s$, we have $[\mathfrak s,\mathfrak r] \subset \mathfrak s \cap \mathfrak r$.
\end{lemma}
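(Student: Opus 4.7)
The plan is to unpack the consequences of the two preceding lemmas and then perform a direct bracket computation. By Lemma \ref{lemma: modification of completely solvable is normal modification}, $\mathfrak r$ is a normal modification of $\mathfrak s$, and by Lemma \ref{lemma: norman modification goes both ways}, $\mathfrak s$ is simultaneously a normal modification of $\mathfrak r$. As recalled in the paragraph preceding the statement, this yields an abelian subalgebra $\mathfrak t$ of the stabilizer normalizing both $\mathfrak s$ and $\mathfrak r$, with
$$\mathfrak s \subset \mathfrak r \rtimes \mathfrak t \quad \text{and} \quad \mathfrak r \subset \mathfrak s \rtimes \mathfrak t.$$
The whole proof will consist of using these two inclusions, once each, to show $[\mathfrak s, \mathfrak r] \subset \mathfrak s$ and $[\mathfrak s, \mathfrak r] \subset \mathfrak r$ separately.

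Concretely, I would fix $X \in \mathfrak s$ and $Y \in \mathfrak r$. Using $\mathfrak r \subset \mathfrak s \rtimes \mathfrak t$, decompose $Y = Y_s + Y_t$ with $Y_s \in \mathfrak s$ and $Y_t \in \mathfrak t$. Then
$$[X,Y] = [X, Y_s] + [X, Y_t],$$
where the first term lies in $\mathfrak s$ because $\mathfrak s$ is a subalgebra, and the second lies in $\mathfrak s$ because $\mathfrak t$ normalizes $\mathfrak s$. Hence $[X,Y] \in \mathfrak s$. A symmetric argument using $\mathfrak s \subset \mathfrak r \rtimes \mathfrak t$ and decomposing $X = X_r + X_t$ shows $[X,Y] \in \mathfrak r$. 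Combining gives $[X,Y] \in \mathfrak s \cap \mathfrak r$, which is the desired conclusion.

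There is essentially no obstacle here beyond making sure the two ingredients are actually available: Lemma \ref{lemma: modification of completely solvable is normal modification} (every modification of a completely solvable algebra is normal) and Lemma \ref{lemma: norman modification goes both ways} (normal modifications are symmetric) together guarantee the existence of the common abelian complement $\mathfrak t$. Once $\mathfrak t$ is in hand, the bracket computation is immediate and uses nothing more than the definition of a semidirect product and the normalizing property of $\mathfrak t$.
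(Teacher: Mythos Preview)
Your argument is correct and is exactly the approach the paper takes: the text immediately preceding the lemma invokes the same two lemmas (including the forward reference to Lemma~\ref{lemma: modification of completely solvable is normal modification}) to produce the abelian $\mathfrak t$ normalizing both algebras with $\mathfrak s \subset \mathfrak r \rtimes \mathfrak t$ and $\mathfrak r \subset \mathfrak s \rtimes \mathfrak t$, and then simply states ``As such, we have the following.'' You have merely made explicit the two-line bracket computation that the paper leaves to the reader.
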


\subsection{Transitive groups of isometries}
The following technical lemmas will be needed later.

\begin{lemma}\label{lemma: changing transitive group of isoms}
Consider a solvable Lie group with left-invariant metric $(R_1,g)$ with isometry group $G=Isom(R_1,g)$.  Suppose $R_2$ is a subgroup of isometries.  Then $R_2$ acts transitively if and only if $\mathfrak g = \mathfrak l + \mathfrak r_2$, where $\mathfrak l$ is the isotropy subalgebra at some point.
\end{lemma}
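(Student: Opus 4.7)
The plan is to translate transitivity of $R_2$ into an infinitesimal condition at a single basepoint via the orbit-stabilizer picture for the transitive $G$-action on $R_1$.  Fix $p\in R_1$, let $L=G_p$ with Lie algebra $\mathfrak l$, and identify $T_pR_1\cong\mathfrak g/\mathfrak l$ through the evaluation map $X\mapsto\ddtat\exp(tX)\cdot p$.  Under this identification, the differential at the identity of the orbit map $R_2\to R_1$, $r\mapsto r\cdot p$, is the composite $\mathfrak r_2\hookrightarrow\mathfrak g\twoheadrightarrow\mathfrak g/\mathfrak l$, and its image is the tangent space to the $R_2$-orbit through $p$.  Surjectivity of this composite is precisely the condition $\mathfrak g=\mathfrak l+\mathfrak r_2$.

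For the forward direction the argument is then immediate: transitivity of $R_2$ forces the orbit map to be onto, and hence its differential to be surjective, giving the algebraic condition.  For the converse I will prove the stronger statement that $R_2 L = G^0$, from which $R_2\cdot p = R_2L\cdot p = G^0\cdot p = R_1$ follows since $L$ fixes $p$ and $G^0$ acts transitively on the connected manifold $R_1$.  Openness of $R_2L$ in $G$ comes from the fact that the multiplication map $R_2\times L\to G$ has differential $\mathfrak r_2 + \mathfrak l = \mathfrak g$ at $(e,e)$.  Closedness of $R_2L$ in $G$ uses the Riemannian-geometry fact that the isotropy $L$ of an isometric action is compact, together with the general observation that the product of a closed subgroup with a compact subgroup is closed in a topological group.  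Consequently $R_2L\cap G^0$ is a nonempty clopen subset of the connected group $G^0$, and thus equals $G^0$.

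I anticipate no serious obstacle; the proof is a short exercise combining the orbit-stabilizer theorem with the compactness of Riemannian isotropy.  The only mild delicacy is the distinction between $G$ and its identity component, which is handled by the standard observation that $G^0$ already acts transitively on the connected (in fact simply-connected) manifold $R_1$, so it is enough to work inside $G^0$ throughout the converse.
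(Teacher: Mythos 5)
Your forward direction is fine and is essentially the paper's. The converse, however, has a genuine gap: the step ``$R_2L$ is closed because $R_2$ is a closed subgroup and $L$ is compact'' uses a hypothesis the lemma does not grant you. The statement only assumes $R_2$ is a subgroup of isometries, and in the paper's applications $R_2$ is a connected Lie subgroup generated by a subalgebra of $\mathfrak g$ --- for instance the group $S$ with Lie algebra $\mathfrak s=\{X-K_X\}$ constructed in Proposition \ref{prop: step 1} --- whose closedness in $Isom(R_1,g)$ is not known at that stage (and connected subgroups generated by subalgebras of an isometry group can fail to be closed in general). Without closedness of $R_2$, the set $R_2L$ is open but need not be closed, so the clopen argument in $G^0$ collapses; and you cannot simply pass to $\overline{R_2}$, since transitivity of the closure does not imply transitivity of $R_2$ itself.

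The paper closes exactly this hole by a Riemannian rather than a topological-group argument: the orbit $R_2\cdot p$, with the induced metric, is a Riemannian homogeneous space and hence complete, and an open complete submanifold of full dimension in a connected Riemannian manifold must be the whole manifold (this is the content of the cited Lemma 3.8 of \cite{Jablo:StronglySolvable}). That argument needs no closedness of $R_2$ in $G$. Your proof would be correct under the additional hypothesis that $R_2$ is closed, but as written it does not establish the lemma in the generality in which the paper uses it; either add the completeness-of-orbits argument for the converse, or supply a separate proof that each $R_2$ arising in the applications is closed.
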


\begin{proof}
If $R_2$ acts transitively, then we immediately see that $G=LR_2$ for any isotropy group $L$, which implies $\mathfrak g = \mathfrak l + \mathfrak r_2$.  Conversely, if the equality $\mathfrak g = \mathfrak l + \mathfrak r_2$ holds, then the orbit of $R_2$ is open and so is transitive since is a complete submanifold of maximal dimension, cf.~  \cite[Lemma 3.8]{Jablo:StronglySolvable}.
\end{proof}

\begin{remark}
We  apply this lemma repeatedly in the following special case. Let $K$ denote the orthogonal automorphisms of $R_1$; this is a subgroup of the isotropy group which fixes $e\in R_1$.  If $\mathfrak k + \mathfrak r_2 \supset \mathfrak r_1$, then $R_2$ acts transitively.
\end{remark}

\section{Proof of main result in the special case of completely solvable and unimodular}
Our general strategy is to reduce to the case that the group is completely solvable and so we begin here.  Let $S$ be  unimodular and completely solvable.  For the sake of consistency throughout the later sections we write  $R=S$ in this section.

\begin{thm}\label{thm: max symm for compl solv unimod}   Let $S$ be a simply-connected, unimodular, completely solvable Lie group.  Then $S$ is a maximal symmetry space.
\end{thm}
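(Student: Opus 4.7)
The plan is to reduce the entire problem to a statement about compact subgroups of $Aut(\mathfrak s)$. The first step is to combine Lemma \ref{lemma: isom group of unimodular solv} with the remark following Proposition \ref{prop:  compl solv are in std position}: since $S$ is completely solvable it is in standard position in any isometry group in which it acts by left translations, and since it is unimodular the lemma then gives, for every left-invariant metric $g$ on $S$,
$Isom(S,g) = C_g \ltimes S$,
where $C_g$ denotes the compact subgroup of $Aut(\mathfrak s)$ consisting of automorphisms preserving $g$. Thus the only piece of structure that varies with $g$ is the compact subgroup $C_g$ sitting inside the fixed Lie group $Aut(\mathfrak s)$.

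Next I would exploit the fact that $Aut(\mathfrak s)$ is a real algebraic subgroup of $GL(\mathfrak s)$, hence a Lie group with only finitely many connected components. Standard structure theory (Cartan--Iwasawa--Mostow) then supplies a maximal compact subgroup $K \subset Aut(\mathfrak s)$; every compact subgroup is conjugate, via some element of $Aut(\mathfrak s)$, into $K$; and any two such $K$ are themselves conjugate. Fix one such $K$, and produce a $K$-invariant inner product $g_0$ on $\mathfrak s$ by averaging an arbitrary inner product against the Haar measure of $K$. Viewing $g_0$ as a left-invariant metric on $S$, we have $K \subset C_{g_0}$, and by maximality of $K$ among compact subgroups the containment is an equality, so $Isom(S,g_0) = K \ltimes S$.

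To verify that $g_0$ is maximally symmetric in the sense of Definition \ref{def:  max symm}, let $g'$ be any other left-invariant metric on $S$. Then $C_{g'}$ is compact in $Aut(\mathfrak s)$, so there is some $\phi \in Aut(\mathfrak s)$ with $\phi^{-1} C_{g'} \phi \subset K$. Since $S$ is simply connected, this $\phi$ integrates to a Lie group automorphism of $S$, hence a diffeomorphism of $S$ normalising the left translation action; conjugation therefore respects the semidirect product structure, and
$\phi^{-1}\,Isom(S,g')\,\phi = \bigl(\phi^{-1} C_{g'} \phi\bigr) \ltimes S \subset K \ltimes S = Isom(S,g_0)$,
which is precisely the inclusion demanded by the definition (with the diffeomorphism being the automorphism $\phi$ itself).

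No individual step should be difficult once the framework is in place. The one place to be careful is the passage ``$C_{g'}$ compact $\Rightarrow$ $C_{g'}$ can be conjugated into a single preferred maximal compact,'' which relies on $Aut(\mathfrak s)$ having only finitely many connected components; this is where the algebraicity of the automorphism group of a Lie algebra is essential. Modulo that input, Theorem \ref{thm: max symm for compl solv unimod} is really the observation that, in the completely solvable unimodular case, maximal symmetry of $S$ is equivalent to the classical conjugacy-of-maximal-compacts theorem applied to $Aut(\mathfrak s)$.
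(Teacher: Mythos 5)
Your proposal is correct and follows essentially the same route as the paper: both rest on the Gordon--Wilson identification $Isom(S,g)=S\rtimes C_g$ with $C_g$ the orthogonal automorphisms, both produce the maximally symmetric metric by averaging over a maximal compact subgroup $K$ of $Aut(\mathfrak s)$, and both conclude via conjugacy of maximal compact subgroups, with the conjugating diffeomorphism an automorphism of $S$. Your explicit remark that the conjugacy theorem applies because $Aut(\mathfrak s)$ is algebraic (hence has finitely many components) is a detail the paper leaves implicit, but it is not a different argument.
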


This theorem is an immediate consequence of the following result of Gordon-Wilson, as we see below.

\begin{thm}[Gordon-Wilson] Let $S$ be a simply-connected, unimodular, completely solvable Lie group with left-invariant metric $g$.  Then $Isom(S,g) = S\rtimes C$, where $C = Aut(\mathfrak s) \cap O(g)$.
\end{thm}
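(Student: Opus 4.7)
The plan is to assemble the theorem directly from the preliminaries recalled in Section \ref{sec: preimlin}; no new machinery is needed. The key input is that $S$, acting on itself by left translation, is already a transitive solvable subgroup of $Isom(S,g)$, and the Remark following Proposition \ref{prop:  compl solv are in std position} asserts that any completely solvable transitive solvable subgroup of isometries is automatically in standard position. Hence $S$ itself is in standard position in $Isom(S,g)$, and no Gordon--Wilson modification is required to reach a group with this property.

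With $S$ in standard position, I would apply Proposition \ref{prop:  compl solv are in std position} directly: since $S$ is solvable, unimodular, and in standard position, the full isometry group coincides with the algebraic isometry group, $Isom(S,g) = AlgIsom(S,g)$. From the definition recalled at the start of Section 2.1, $AlgIsom(S,g) = S \rtimes C$, where $C$ is the group of orthogonal automorphisms of $S$. Since $S$ is simply-connected and solvable, $Aut(S)$ is naturally identified with $Aut(\mathfrak s)$ via the differential at the identity, and the condition that such an automorphism be an isometry of the left-invariant metric $g$ reduces (by homogeneity) to its differential preserving $g_e$. This yields $C = Aut(\mathfrak s) \cap O(g)$, completing the identification.

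The only conceptual subtlety worth flagging is the uniqueness of the standard-position subgroup: a priori, Lemma \ref{lemma: isom group of unimodular solv} produces some transitive solvable $R'' \subset Isom(S,g)$ in standard position, and one must verify that we may take $R'' = S$. This is precisely what the Remark about complete solvability guarantees, since $S$ is itself completely solvable and transitive. One could alternatively argue using Lemma \ref{lemma: modification of completely solvable is normal modification} together with Lemma \ref{lemma: norman modification goes both ways} to see that $S$ is interchangeable with any such $R''$ inside $Isom(S,g)$. Either way, the proof amounts to a short citation chain, with the substantive work borne by Theorems 4.2 and 4.3 of \cite{GordonWilson:IsomGrpsOfRiemSolv}.
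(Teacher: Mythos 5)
Your proposal is correct and follows essentially the same route the paper intends: the theorem is a direct specialization of the facts recalled in Section \ref{sec: preimlin} (Lemma \ref{lemma: isom group of unimodular solv}, Proposition \ref{prop:  compl solv are in std position}, and the Remark that completely solvable groups are always in standard position), with the substantive content residing in Theorems 4.2 and 4.3 of \cite{GordonWilson:IsomGrpsOfRiemSolv}. The paper treats this statement as a cited result of Gordon--Wilson rather than proving it, so your citation chain, including the observation that $S$ itself serves as the standard-position group $R''$, matches the intended justification.
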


\begin{remark*} In the above, we have abused notation as we are viewing $C$ as a subgroup of $Aut(S)$.  This is OK as  $S$ being simply-connected gives that the action of $C$ on $\mathfrak s$ lifts to an action on $S$.
\end{remark*}

Given the compact group $C<Aut(S)$, choose any maximal compact subgroup $K$ of $Aut(S)$ containing $C$ and an inner product $g'$ on $\mathfrak s$ so that $K$ acts orthogonally.  Now we have 
	$$Isom(S,g) < Isom(S,g')$$
To see that $S$ is indeed a homogeneous maximal symmetry space, we only need to compare isometry groups where $C=K$ is a maximal compact subgroup of $Aut(S)$.  

Let $g_1$ and $g_2$ be two left-invariant metrics with isometry groups $S\rtimes K_1$ and $S\rtimes K_2$, respectively, such that $K_1$ and $K_2$ are maximal compact subgroups of automorphisms.  As maximal compact subgroups are all conjugate, there exists $\phi \in Aut(S)$ such that $K_1 = \phi K_2 \phi^{-1}$ and hence 
	$$Isom(S,g_1) = \phi Isom(S,g_2) \phi^{-1} = Isom(S, \phi^* g_2).$$
This shows that unimodular, completely solvable Lie groups are indeed homogeneous maximal symmetry spaces.

\section{Proof of main result for general solvable, unimodular group}

To prove this result, we start by adjusting our metric so as to enlarge the isometry group to one which is the isometry group of a completely solvable, unimodular group.  Then we show that the completely solvable group obtained is unique up to conjugation and use this to prove that there is one largest isometry group for $R$ up to conjugation.

\subsection{Enlarging the isometry group to find some completely solvable group}

\begin{prop}\label{prop: step 1}  Let $R$ be a simply-connected solvmanifold with left-invariant metric $g$.  There exists another left-invariant metric  $g'$ such
	\begin{enumerate}
	\item $Isom(R,g) < Isom(R,g')$, and
	\item $Isom(R,g')$ contains a transitive, completely solvable group $S$.
	\end{enumerate}
\end{prop}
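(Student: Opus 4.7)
My plan is to enlarge the isometry group of $g$ by absorbing a compact torus of automorphisms of $R$, and then to obtain a transitive completely solvable subgroup via a nilshadow-style twist of $R$.

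First, I would construct the candidate completely solvable group $S$ algebraically. Let $H = \overline{Ad(R)}$ denote the real algebraic closure of the adjoint representation inside $Aut(\mathfrak r) \subset GL(\mathfrak{r})$. As $H$ is a solvable real algebraic group, it decomposes as $H = T_0 \cdot E$ with $T_0$ a maximal compact (toral) subgroup and $E$ a completely solvable complement. Because $R$ is simply connected, $T_0$ corresponds to a compact subgroup $T \subset Aut(R)$. Using the splitting to project each $ad_X$ onto its $E$-component yields a twisted bracket on the vector space $\mathfrak r$ whose associated Lie algebra $\mathfrak s$ is completely solvable; this generalizes the nilshadow construction from type R groups to arbitrary solvable groups. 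Let $S$ be the corresponding simply-connected Lie group. Then $S$ embeds naturally into $R \rtimes T$ via a section $X \mapsto (X, \sigma(X)) \in \mathfrak r \oplus \mathrm{Lie}(T)$, and under the projection to $R$ the groups $S$ and $R$ agree as manifolds; in particular $S$ acts transitively on $R$.

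Next, I would construct $g'$. By left-invariance, $Isom(R,g) \subset Isom(R,g')$ is equivalent to the isotropy $L$ of $(R,g)$ at $e$ acting orthogonally on $(\mathfrak r, g'|_e)$. To also have $T \subset Isom(R,g')$, one needs $g'|_e$ to be $T$-invariant as well. Provided $L$ and the differential action of $T$ together generate a precompact subgroup of $GL(\mathfrak r)$, a jointly invariant inner product exists by averaging, and the induced left-invariant metric $g'$ then satisfies both $Isom(R,g) \subset Isom(R,g')$ and $S \subset R \rtimes T \subset Isom(R,g')$, with $S$ transitive and completely solvable.

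The main difficulty is ensuring that $L$ and $T$ generate a precompact subgroup of $GL(\mathfrak r)$, since $L$ need not a priori lie in $Aut(R)$. To handle this I would first apply the Gordon-Wilson modification procedure to replace $R$ by a transitive solvable subgroup $R''$ of $Isom(R,g)$ in standard position; this does not alter $Isom(R,g)$ itself but puts the isotropy into a form compatible with $Aut(R'')$. For $R''$ in standard position the isotropy component acting on $\mathfrak r''$ sits inside a compact subgroup of $O(g)\cap Aut(R'')$, and the torus $T$ arising from $\overline{Ad(R'')}$ can then be combined with it inside a single maximal compact subgroup of $Aut(R'')$. Transitivity of the resulting $S$ follows from Lemma~\ref{lemma: changing transitive group of isoms}.
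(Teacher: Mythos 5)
Your proposal is correct and follows essentially the same route as the paper: pass to the Gordon--Wilson group $R''$ in standard position, enlarge the orthogonal automorphisms to a maximal compact subgroup of $Aut(R'')$ and average to get $g'$, then obtain $S$ by subtracting off the compact (toral) part of each $ad_X$ and invoke Lemma~\ref{lemma: changing transitive group of isoms} for transitivity. The only cosmetic difference is that you phrase the twist via the algebraic closure of $Ad(R'')$ rather than the algebraic Levi decomposition of $Rad(Aut(R''))$, which amounts to the same splitting of $ad_X$ into compact, split, and nilpotent parts.
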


\begin{proof}[Proof of \ref{prop: step 1}]     From the work of Gordon-Wilson (see Lemma \ref{lemma: isom group of unimodular solv} above), we have the existence of a transitive, solvable subgroup $R'' < Isom(R,g)$ such that 
	$$Isom(R,g) = AlgIsom(R'',g) = C \ltimes R''$$
where $C$ consists of orthogonal automorphisms of $\mathfrak r''$.  Here $R''$ is the group in standard position in $Isom(R,g)$.

There exists a maximal compact subalgebra $K$ of $Aut(R'')$ containing $C$.  Choose any inner product $g'$ on  $\mathfrak r''$ so that $K$ consists of orthogonal automorphisms.  Then we immediately see that  that $Isom(R'', g')  > K\ltimes R''$.   Applying Lemma \ref{lemma: changing transitive group of isoms}, we see that $R$ acts transitively by isometries on $(R'',g')$ and so this new left-invariant metric $g'$ on $R''$ gives rise to a left-invariant metric on $R$.  This choice of $g'$ satisfies part $(i)$.

To finish, we show that $Isom(R,g')$ contains a completely solvable group $S$ which acts transitively.  Consider the group $Ad(R'')$ as a subgroup of $Aut(R'')$.  This group is a normal, solvable subgroup and so is a subgroup of the radical $Rad(Aut(R''))$ of $Aut(R'')$.

As $Rad(Aut(R''))$ is an  algebraic group, it has an algebraic Levi decomposition 
	$$Rad(Aut(R'')) = M \ltimes N$$
where $M$ is a maximal reductive subgroup and $N$ is the unipotent radical (see \cite{Mostow:FullyReducibleSubgrpsOfAlgGrps}).  Furthermore, the group $M$ is abelian and decomposes as $M = M_K M_P$, where $M_K$ is a compact torus and $M_P$ is a split torus.  As maximal compact subgroups are all conjugate and $Rad(Aut(R''))$ does not change under conjugation, we may assume that $M_K <K$.  So given $X\in \mathfrak r''$, we may write
	$$ad~X = K_X + P_X + N_X$$
where $K_X\in Lie~M_K$, $P_X\in Lie~M_P$, and $N_X\in \mathfrak n = Lie~N$.

Now define the set $\mathfrak s \subset \mathfrak r'' \rtimes Lie~M_K \subset Lie\ Isom(R'',g')$ as
	$$\{X-K_X \ | \ X\in \mathfrak r'' \}$$
Since the $K_X$ all commute, the nilradical of $\mathfrak r''$ is contained in $\mathfrak s$,  and derivations of $\mathfrak r''$ are valued in the nilradical, we see that $\mathfrak s$ is a solvable Lie algebra.   

Observe that $\mathfrak s$ is completely solvable (this follows as in the proof of Proposition \ref{prop: max comp solv ideal}) and $S$ acts transitively (via Lemma \ref{lemma: changing transitive group of isoms}).

\end{proof}

As completely solvable groups are always in standard position (Proposition \ref{prop:  compl solv are in std position}), we see that $R$ is a modification of the group $S$ and that 
	$$Isom(R,g) < Isom(R,g') = Isom(S,g') = S \rtimes C$$
where $C$ is the compact group of orthogonal automorphisms of $\mathfrak s$, relative to $g'$.  Let $K$ denote a maximal compact group of automophisms of $\mathfrak s$ and $g''$ an inner product on $\mathfrak s$ such that $Isom(S,g'') = S\rtimes K$.  Applying Lemma \ref{lemma: changing transitive group of isoms}, we see that $R$ acts transitively by isometries on $(S,g'')$ and that
	$$Isom(R,g) < Isom(R,g'')$$
In this way, we have found an isometry group  $Isom(R,g'')$ which is a maximal isometry group for $S$ and so by Theorem \ref{thm: max symm for compl solv unimod} cannot be any larger.

This is a reasonable candidate for maximal isometry group for $R$; we verify this in the sequel.

\subsection{The uniqueness of $S$}

The group $S$, constructed  above, depends on several choices made based on various initial and chosen metrics.   More precisely, one starts with metric $g$, does two modifications to obtain the group $R''$, then changes the metric to some $g'$ to extract the group $S$.

If one were to start with a different metric $h$ on $R$, then $R''$ would certainly be different and so it is unclear, a priori,  how would the resulting $S$ for $h$ compares  to the first one.  Surprisingly, they must be conjugate via $Aut(R)$.

\begin{prop}\label{prop:  s contained in r rtimes k}  There exists a maximal compact subalgebra $\mathfrak k$ of $Der(\mathfrak r)$  such that $\mathfrak s$ is the maximal completely solvable ideal of $\mathfrak r \rtimes \mathfrak k$.
\end{prop}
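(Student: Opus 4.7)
The plan is to realize $\mathfrak s$ — produced through auxiliary metric choices in Proposition \ref{prop: step 1} — as an intrinsic algebraic invariant of $\mathfrak r$, namely the maximal completely solvable ideal of a semi-direct product $\mathfrak r \rtimes \mathfrak k$ for an appropriate maximal compact $\mathfrak k \subset Der(\mathfrak r)$. I would first invoke the Main Lemma (Lemma \ref{lemma: modification of completely solvable is normal modification}) together with Lemma \ref{lemma: norman modification goes both ways}: since $\mathfrak s$ is completely solvable and $\mathfrak r$ is a modification of it in the enlarged isometry group built in Proposition \ref{prop: step 1}, the two are mutual normal modifications. Hence there is an abelian subalgebra $\mathfrak t$ of the isotropy at a basepoint normalizing both $\mathfrak r$ and $\mathfrak s$, with $\mathfrak s \subset \mathfrak r \rtimes \mathfrak t$ and $\mathfrak r \subset \mathfrak s \rtimes \mathfrak t$. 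Since isotropy acts faithfully by orthogonal (hence semisimple) automorphisms, $\mathfrak t$ embeds as a compact abelian subalgebra of $Der(\mathfrak r)$; I take $\mathfrak k$ to be a maximal compact subalgebra of $Der(\mathfrak r)$ containing $\mathfrak t$ and, using Mostow's conjugacy theorem for maximal reductive subalgebras of algebraic Lie algebras, chosen to normalize a fixed reductive complement $\mathfrak m = \mathfrak m_K \oplus \mathfrak m_P$ in the Levi decomposition $rad(Der(\mathfrak r)) = \mathfrak m \ltimes \mathfrak n$.

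Next I would verify that $\mathfrak s = \{X - K_X : X \in \mathfrak r\}$, where $K_X \in \mathfrak m_K$ is the compact part of $ad~X$, is an ideal of $\mathfrak r \rtimes \mathfrak k$. (This is the same construction as in Proposition \ref{prop: step 1} but applied to $\mathfrak r$ rather than $\mathfrak r''$, and one verifies it reproduces the same abstract $\mathfrak s$.) The inclusion $[\mathfrak r, \mathfrak s] \subset \mathfrak s$ is Lemma \ref{lemma: s is an ideal in r rtimes N_l(r)}. For $[\mathfrak k, \mathfrak s] \subset \mathfrak s$, the computation
\[
[k, X - K_X] = k(X) - [k, K_X],
\]
combined with the derivation identity $ad(k(X)) = [k, ad~X]$ and the $\mathfrak k$-invariance of the decomposition $\mathfrak m_K \oplus \mathfrak m_P \oplus \mathfrak n$ (the nilradical $\mathfrak n$ is characteristic, and any compact normalizer of the abelian $\mathfrak m$ preserves its compact/split splitting), yields $[k, K_X] = K_{k(X)}$ and hence $[k, X - K_X] = k(X) - K_{k(X)} \in \mathfrak s$. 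Complete solvability of $\mathfrak s$ was already established in the proof of Proposition \ref{prop: step 1}.

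For maximality, Proposition \ref{prop: max comp solv ideal} provides a unique maximal completely solvable ideal $\mathfrak s^*$ of $\mathfrak r \rtimes \mathfrak k$, and the previous paragraph gives $\mathfrak s \subset \mathfrak s^*$. Since $\mathfrak s^*$ lies in the radical, which for $\mathfrak k$ reductive is $\mathfrak r \oplus Z(\mathfrak k)$, the $\mathfrak k$-component of any element of $\mathfrak s^*$ is in the central torus $Z(\mathfrak k)$; any such central component must act on $\mathfrak s^*$ with simultaneously imaginary (from compactness of $\mathfrak k$) and real (from complete solvability of $\mathfrak s^*$) eigenvalues, hence with only zero eigenvalues, and from this one reads off that the central component of each element of $\mathfrak s^*$ must equal $-K_X$ for its $\mathfrak r$-component $X$ — yielding $\mathfrak s^* \subset \mathfrak s$.

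The main difficulty lies in the choice of $\mathfrak k$: one needs $\mathfrak k$ simultaneously to contain the isotropic torus $\mathfrak t$ produced by the normal modification and to normalize the reductive complement $\mathfrak m$ appearing in the Levi decomposition of $rad(Der(\mathfrak r))$, so that $\mathfrak k$ preserves $\mathfrak m_K$ and the crucial identity $[k, K_X] = K_{k(X)}$ holds on the nose. Coordinating these two demands — via Mostow's theorem on algebraic Lie algebras — is the structural heart of the proof, since without $\mathfrak k$-invariance of $\mathfrak m_K$ the obvious $\mathfrak t$-invariance of $\mathfrak s$ cannot be promoted to full $\mathfrak k$-invariance.
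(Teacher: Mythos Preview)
Your setup matches the paper's: via Lemma \ref{lemma: modification of completely solvable is normal modification} and Lemma \ref{lemma: norman modification goes both ways} you obtain the abelian $\mathfrak t$ with $\mathfrak s \subset \mathfrak r \rtimes \mathfrak t$ and $\mathfrak r \subset \mathfrak s \rtimes \mathfrak t$, and then enlarge $\mathfrak t$ to a maximal compact $\mathfrak k \subset Der(\mathfrak r)$. From that point, however, the paper takes a much shorter path and imposes \emph{no} further condition on $\mathfrak k$. The $\mathfrak k$-stability of $\mathfrak s$ is obtained in one stroke: every derivation of $\mathfrak r$ is valued in the nilradical $\mathfrak n(\mathfrak r)$, so it suffices to show $\mathfrak n(\mathfrak r) \subset \mathfrak s$; and this holds because any $X + K \in \mathfrak r \subset \mathfrak s \rtimes \mathfrak t$ with $ad(X+K)$ nilpotent must have $K = 0$, the eigenvalues of $ad(X+K)$ being sums of the (real) eigenvalues of $ad~X$ and the (purely imaginary) eigenvalues of $K$. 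Maximality is equally immediate: $\mathfrak s$ is a vector-space complement to $\mathfrak k$ in $\mathfrak r \rtimes \mathfrak k$, and any nonzero element of $ad~\mathfrak k$ contributes a nonreal eigenvalue.

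Your route instead re-expresses $\mathfrak s$ as $\{X - K_X : X \in \mathfrak r\}$ and tries to verify $[\mathfrak k,\mathfrak s]\subset\mathfrak s$ via the identity $[k,K_X]=K_{k(X)}$, which forces you to arrange that $\mathfrak k$ normalize the chosen reductive complement $\mathfrak m \subset rad(Der(\mathfrak r))$. There are two genuine gaps. First, the identification of the $\mathfrak s$ produced in Proposition \ref{prop: step 1} (which was built from $\mathfrak r''$, not from $\mathfrak r$) with $\{X - K_X : X \in \mathfrak r\}$ is asserted parenthetically but never argued; without it your computation of $[\mathfrak k,\mathfrak s]$ does not apply to the object in the statement. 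Second, you yourself flag the coordination of $\mathfrak k$ with $\mathfrak m$ as ``the structural heart'' but do not carry it out; and in your maximality step the eigenvalue observation only shows that the $Z(\mathfrak k)$-component of an element of $\mathfrak s^*$ acts trivially on $\mathfrak s^*$, not that it equals $-K_X$. None of these gaps is unfillable, but the paper's nilradical observation renders the whole apparatus --- the Mostow coordination, the $K_X$ map, the explicit description of $\mathfrak s$ --- unnecessary, and works for \emph{any} maximal compact $\mathfrak k$ containing $\mathfrak t$.
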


Before proving this proposition, we use it to show that any two groups $S$ constructed from $R$ are conjugate via $Aut(R)$.

Let $g$ and $h$ be two different metrics on $R$ with associated completely solvable algebras $\mathfrak s_g$ and $\mathfrak s_h$, respectively.  Let $\mathfrak k_g$ and $\mathfrak k_h$ be the compact algebras as in the proposition above for $g$ and $h$, respectively.  As the maximal compact of a group is unique up to conjugation, we have some $\phi \in Aut(R)$ such that $\mathfrak k_g = \phi \mathfrak k_h \phi^{-1}$.  This implies
	$$\phi \mathfrak s_h \phi^{-1} \subset \mathfrak r \rtimes   \phi \mathfrak k_h \phi^{-1} = \mathfrak r \rtimes  \mathfrak k_g.$$
As $\phi \mathfrak s_h \phi^{-1}$ is completely solvable and of the same dimension as the maximal completely solvable $\mathfrak s_g$, they must be equal, cf.\ Proposition \ref{prop: max comp solv ideal}.

We now prove Proposition \ref{prop:  s contained in r rtimes k}.

\begin{lemma}\label{lemma: modification of completely solvable is normal modification} Let $\mathfrak s$ be a completely solvable Lie algebra with inner product.  Any modification of $\mathfrak s$ (in its isometry algebra) is a normal modification.
\end{lemma}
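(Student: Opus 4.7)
The plan is to work within the Gordon--Wilson modification formalism and to leverage the spectral / Jordan-decomposition technique already used in the proof of Proposition~\ref{prop: max comp solv ideal}. Since $\mathfrak{s}$ is completely solvable it is in standard position (cf.\ Proposition~\ref{prop:  compl solv are in std position}), so the isometry algebra decomposes as $\mathfrak{g} = \mathfrak{s} \rtimes \mathfrak{c}$ with $\mathfrak{c}$ the algebra of skew-symmetric derivations of $\mathfrak{s}$. Any transitive solvable modification $\mathfrak{r} \subset \mathfrak{g}$ necessarily has the same dimension as $\mathfrak{s}$ and projects isomorphically onto it modulo $\mathfrak{c}$, so I can write $\mathfrak{r} = \{X + \phi(X) : X \in \mathfrak{s}\}$ for a uniquely determined linear map $\phi : \mathfrak{s} \to \mathfrak{c}$.

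The strategy is then to show that the image $\mathfrak{t} := \phi(\mathfrak{s})$ is an \emph{abelian} subalgebra of $\mathfrak{c}$; once this is established, the remaining ingredients of a normal modification (namely, that $\mathfrak{t}$ normalizes both $\mathfrak{s}$ and $\mathfrak{r}$) are automatic, since $\mathfrak{c}$ already acts on $\mathfrak{s}$ by derivations and the reverse normalization follows from Lemma~\ref{lemma: norman modification goes both ways}. To prove commutativity of $\mathfrak{t}$, I would follow the Mostow-decomposition step from Proposition~\ref{prop: max comp solv ideal}: embed $ad(\mathfrak{g})$ in $Der(\mathfrak{g})$, write $rad(Der(\mathfrak{g})) = \mathfrak{m} \ltimes \mathfrak{n}$ with $\mathfrak{m}$ abelian reductive, and decompose $ad(Y) = M_Y + N_Y$ for each $Y$. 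For $X \in \mathfrak{s}$, complete solvability makes the eigenvalues of $M_X$ real; for $C \in \mathfrak{c}$, skew-symmetry makes the eigenvalues of $M_C$ purely imaginary. Thus $\mathfrak{m}$ splits canonically as $\mathfrak{m}_{\mathbb{R}} \oplus \mathfrak{m}_{i\mathbb{R}}$, and the semisimple parts of $X$ and of $\phi(X)$ land in complementary summands.

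The main step, and also the main obstacle, is to promote the abelianness of the semisimple parts (which is immediate since $\mathfrak{m}$ itself is abelian) to genuine abelianness of $\mathfrak{t}$. Here I would exploit the cocycle identity obtained by expanding $[X + \phi(X), Y + \phi(Y)] \in \mathfrak{r}$ and separating $\mathfrak{s}$-components from $\mathfrak{c}$-components: the $\mathfrak{c}$-part yields $[\phi(X), \phi(Y)] = \phi(Z)$ for an explicit $Z \in \mathfrak{s}$, while comparing the real- and imaginary-eigenvalue summands in $\mathfrak{m}$ forces the imaginary semisimple part of this bracket, and hence (via the nilpotent complement) the entire bracket, to vanish. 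The nilpotent contributions are absorbed thanks to the observation in the remark following Proposition~\ref{prop: max comp solv ideal} that the nilradical is contained in $\mathfrak{s}$. Once $\mathfrak{t}$ is shown to be abelian, the presentations $\mathfrak{s} \subset \mathfrak{r} + \mathfrak{t}$ and $\mathfrak{r} \subset \mathfrak{s} + \mathfrak{t}$ hold tautologically, matching the characterization of a normal modification recorded after Lemma~\ref{lemma: norman modification goes both ways}, and the lemma is proved.
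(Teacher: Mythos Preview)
Your overall strategy---show that $\mathfrak{t}=\phi(\mathfrak{s})$ is abelian and then read off the normal-modification criteria---is different from the paper's, which instead proves the Gordon--Wilson criterion $[\mathfrak{s},\mathfrak{s}]\subset\ker\phi$ directly via the decomposition $\mathfrak{s}=\mathfrak{a}+\mathfrak{n}(\mathfrak{s})$ and a generalized-eigenspace analysis of $D=\phi(X)+ad\,X$ on $\mathfrak{n}(\mathfrak{s})$, together with the known nilpotent case (Gordon--Wilson, Theorem~2.5) for the piece $[\mathfrak{n}(\mathfrak{s}),\mathfrak{n}(\mathfrak{s})]$.  Your route could in principle work, but as written it has real gaps.

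First, the appeal to Lemma~\ref{lemma: norman modification goes both ways} for ``reverse normalization'' is circular: that lemma assumes the modification is already normal.  Abelianness of $\mathfrak{t}$ alone does \emph{not} give you that $\mathfrak{t}$ normalizes $\mathfrak{r}$; concretely, for $T=\phi(W)\in\mathfrak{t}$ one computes $[T,\,X+\phi(X)]=\phi(W)X\in\mathfrak{s}$, and this lies in $\mathfrak{r}$ only if $\phi(\phi(W)X)=0$, which is exactly the kind of vanishing you have not yet established.  Second, your Mostow-decomposition step is set up on $Der(\mathfrak{g})$ with $\mathfrak{g}$ the full isometry algebra, but $ad(\mathfrak{g})$ need not sit inside $rad(Der(\mathfrak{g}))$: the isotropy algebra $\mathfrak{c}$ is compact and typically nonabelian, so $\mathfrak{g}$ is not solvable.  (The argument in Proposition~\ref{prop: max comp solv ideal} worked because there $\mathfrak{g}$ itself was solvable.)  Third, even if one repairs the setup by working inside $Der(\mathfrak{s})$, your ``main step'' is not actually carried out.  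Elements of $\mathfrak{c}$ act semisimply, so there is no nilpotent part to ``absorb''; the bracket $[\phi(X),\phi(Y)]$ lands in $[\mathfrak{c},\mathfrak{c}]$, which is the semisimple part of a compact algebra and has purely imaginary spectrum---exactly the same spectral type as any $\phi(Z)$.  So comparing real versus imaginary eigenvalue summands in $\mathfrak{m}$ does not by itself force $[\phi(X),\phi(Y)]=0$.  You would need an additional argument (for instance, showing $\phi(\mathfrak{s})$ lies in a torus of $\mathfrak{c}$, or showing $\phi$ kills $\mathfrak{n}(\mathfrak{s})$ first), and that is essentially where the work in the paper's eigenspace analysis is being done.
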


\begin{proof}
Let $\mathfrak r = (id + \phi) \mathfrak s$ be a modification of $\mathfrak s$ with modification map $\phi :  \mathfrak s \to N_l(\mathfrak s)$.  To show this is a normal modification, it suffices to show $[\mathfrak s,\mathfrak s]\subset Ker~\phi$ by Proposition 2.4 of \cite{GordonWilson:IsomGrpsOfRiemSolv}.

Denote the nilradical of $\mathfrak{s}$ by $\mathfrak{n(s)}$.  As every derivation of $\mathfrak s$ takes its value in $\mathfrak{n(s)}$, we can decompose $\mathfrak s = \mathfrak a + \mathfrak{n(s)}$ where $\mathfrak a$ is annihilated by $N_l(\mathfrak s)$.  As $\phi$ is linear, to show $[\mathfrak s,\mathfrak s] \subset Ker~\phi$, it suffices to show $[\mathfrak a, \mathfrak a], [\mathfrak a,\mathfrak{n(s)} ], [\mathfrak{n(s)},\mathfrak{n(s)}] \subset Ker~\phi$.

Take $X,Y\in \mathfrak a$.  By the construction of $\mathfrak a$ and Proposition 2.4 (i) of \cite{GordonWilson:IsomGrpsOfRiemSolv}, we have
	$$ [X,Y] = \phi(X)Y - \phi(Y) X + [X,Y] = [\phi(X) +X ,\phi(Y)+Y] \in Ker~\phi$$
Now consider $X\in \mathfrak a$ and $Y\in\mathfrak{n(s)}$.  As above, the following is contained in $Ker~\phi$
	$$[\phi(X)+X,\phi(Y)+Y] = \phi(X)Y - [X,Y]$$
that is, $ad~(\phi(X) + X) : \mathfrak{n(s)} \to Ker~\phi$.

Since every derivation of $\mathfrak s$ takes its image in $\mathfrak{n(s)}$, and $\mathfrak r \subset N_l(\mathfrak s)\ltimes \mathfrak{s}$, we see that $\mathfrak{n(s)}$ is stable under $D=ad~(\phi(X) + X) = \phi(X) + ad~X$.  Denoting the generalized eigenspaces of $D$ on $\mathfrak{n(s)}^\mathbb C$ by $V_\lambda$, we have
	$$\mathfrak{n(s)} = \bigoplus  (V_\lambda \oplus  V_{\bar{\lambda} }) \cap \mathfrak{n(s)}$$
Each summand is invariant under both $\phi(X)$ and $ad~X$ as these commute.  Further, if $\lambda = a + bi$, then on $V_\lambda$ we have $\phi(X)^2 = -b^2 Id$ and $ad~X$ can be realized as an upper triangular matrix whose diagonal is $a~Id$.  Observe that $Ker~D = Ker~\phi(X) \cap Ker~ad~X$, so if $b\not = 0$, then we see that $D$ is non-singular on $V_\lambda$ and $V_{\bar \lambda}$.  This implies $V_\lambda = D(V_\lambda)$ and $V_{\bar \lambda} = D(V_{\bar \lambda})$, which implies
	$$Im(ad~X|_{(V_\lambda \oplus  V_{\bar{\lambda} }) \cap \mathfrak{n(s)} }      )  \subset    (V_\lambda \oplus  V_{\bar{\lambda} }) \cap \mathfrak{n(s)} \subset Im(D|_{\mathfrak{n(s)}}) \subset Ker~\phi$$
If $b=0$, then $V_\lambda = V_{\bar \lambda}$ and $D|_{V_\lambda} = ad~X|_{V_\lambda}$.  Which implies
	$$ad~X|_{V_\lambda} \subset Ker~\phi$$
All together, this proves $[\mathfrak a,\mathfrak{n(s)}] \subset Ker~\phi$.

To finish, one must show $[\mathfrak{n(s)},\mathfrak{n(s)}]\subset Ker~\phi$.  However, as every derivation of $\mathfrak s$ preserves $\mathfrak{n(s)}$, we may restrict our modification to $\mathfrak{n(s)}$ and we have a modification
	$$ \mathfrak n' = (id+\phi) \mathfrak{n(s)} \subset N_l(\mathfrak s) \ltimes \mathfrak{n(s)}$$
Theorem 2.5 of \cite{GordonWilson:IsomGrpsOfRiemSolv} show thats any modification of a nilpotent subalgebra must be a normal modification.  Now Proposition 2.4 (ii d), loc.~cit., implies $[\mathfrak{n(s)},\mathfrak{n(s)}] \subset Ker~\phi$.  This completes the proof of our Lemma.

\end{proof}

\begin{remark} Not all modifications are normal modifications, even in the case of starting with an algebra in standard position.  An example of this can be found in Example 3.9 of \cite{GordonWilson:IsomGrpsOfRiemSolv}.  We warn the reader that there are some typos in that example, the block diagonal matrices of $A$ and $V_1$ should be interchanged.  And then one should replace $A-V_1$ with $A+V_1$ through out the example.
\end{remark}

As explained in the discussion surrounding Lemmas \ref{lemma: norman modification goes both ways} and Lemma \ref{lemma: s is an ideal in r rtimes N_l(r)}, $\mathfrak r$ and $\mathfrak s$ are normal modifications of each other and there is an abelian subalgebra $\mathfrak t$ of the stabilizer subalgebra which normalizes both $\mathfrak r$ and $\mathfrak s$ and for $\mathfrak s \subset \mathfrak r \rtimes \mathfrak t$.  Here $\mathfrak s$ is an ideal.

The proposition follows immediately from the following lemma.

\begin{lemma}  Let $\mathfrak k$ be any maximal compact subalgebra of $Der(\mathfrak r)$ containing $\mathfrak t$.  Then $\mathfrak s$ is a maximal completely solvable  ideal of $\mathfrak r \rtimes \mathfrak k$ (cf. Proposition \ref{prop: max comp solv ideal}).
\end{lemma}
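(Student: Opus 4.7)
The proof splits into two parts: establishing that $\mathfrak s$ is an ideal of $\mathfrak r \rtimes \mathfrak k$, and establishing maximality.  By Proposition \ref{prop: max comp solv ideal} the maximal completely solvable ideal $\mathfrak s^\ast$ of $\mathfrak r \rtimes \mathfrak k$ is unique, so the two steps together force $\mathfrak s = \mathfrak s^\ast$.

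For maximality, the plan is a dimension argument: I will show that any completely solvable ideal $\mathfrak h \subset \mathfrak r \rtimes \mathfrak k$ satisfies $\mathfrak h \cap \mathfrak k = 0$, so the projection $\mathfrak h \to \mathfrak r$ is injective and $\dim \mathfrak h \le \dim \mathfrak r = \dim \mathfrak s$.  For $K \in \mathfrak h \cap \mathfrak k$, the operator $ad\,K|_{\mathfrak h}$ has real eigenvalues (from complete solvability of $\mathfrak h$ together with $K \in \mathfrak h$) and also purely imaginary eigenvalues (since $K$ lies in the compact $\mathfrak k$ and $\mathfrak h$ is $\mathfrak k$-invariant), so it vanishes and $K$ is central in $\mathfrak h$.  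The ideal property of $\mathfrak h$ gives $K \cdot \mathfrak r = [\mathfrak r, K] \subset \mathfrak h \cap \mathfrak r$, and centrality forces $K \cdot (\mathfrak h \cap \mathfrak r) = 0$, hence $K^2 \cdot \mathfrak r = 0$.  Since $K$ acts on $\mathfrak r$ as a semisimple operator with purely imaginary spectrum, $K^2 = 0$ on $\mathfrak r$ forces $K = 0$ in $Der(\mathfrak r)$, and therefore in $\mathfrak k$.

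For the ideal property, Lemma \ref{lemma: s is an ideal in r rtimes N_l(r)} already supplies $[\mathfrak r, \mathfrak s] \subset \mathfrak s$, so the task reduces to $[\mathfrak k, \mathfrak s] \subset \mathfrak s$.  I will write $\mathfrak s$ as the graph $\{X + \psi(X) : X \in \mathfrak r\}$ of a linear map $\psi : \mathfrak r \to \mathfrak t$, which is available because the simply transitive action of $\mathfrak s$ and the inclusion of $\mathfrak t$ in the stabilizer subalgebra give $\mathfrak s \cap \mathfrak t = 0$.  The desired inclusion is then equivalent to the $\mathfrak k$-equivariance $\psi(K \cdot X) = [K, \psi(X)]_{\mathfrak k}$ for all $K \in \mathfrak k, X \in \mathfrak r$.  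I would establish this by identifying $\psi(X)$ with $-K_X$, the $M_K$-component of $ad\,X$ in the algebraic Levi decomposition $Rad(Der(\mathfrak r)) = (M_K \oplus M_P) \ltimes N$ used in the proof of Proposition \ref{prop: step 1}, and invoking the identity $ad(K \cdot X) = [K, ad\,X]$ (valid for any derivation $K$ of $\mathfrak r$).  Because each of $M_K, M_P, N$ is characteristic in $Rad(Der(\mathfrak r))$ and so is preserved under conjugation by elements of $\mathfrak k$, this identity decomposes componentwise; comparing $M_K$-components yields $K_{K \cdot X} = [K, K_X]$, which is exactly the required equivariance.

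The main obstacle I expect is justifying the identification $\psi(X) = -K_X$, since the original construction of $\mathfrak s$ in Proposition \ref{prop: step 1} was carried out relative to the standard-position group $\mathfrak r''$ rather than the ambient $\mathfrak r$.  I plan to address this by characterizing $\psi$ intrinsically: $X + \psi(X) \in \mathfrak s$ is the unique lift of $X \in \mathfrak r$ into $\mathfrak r \rtimes \mathfrak t$ whose adjoint on the common nilradical $\mathfrak n(\mathfrak r) = \mathfrak n(\mathfrak s)$ has real eigenvalues.  Combined with the fact that $\mathfrak t$ acts faithfully on $\mathfrak n$, this pins down $\psi(X) = -K_X$ and makes the $\mathfrak k$-equivariance manifest.
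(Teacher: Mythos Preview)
Your maximality argument is essentially the paper's (a dimension count using that $\mathfrak s$ is a complement to $\mathfrak k$ and that elements of $\mathfrak k$ act with purely imaginary spectrum), only spelled out in more detail.

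For the ideal property the paper takes a much shorter route than your equivariance argument.  After invoking $[\mathfrak r,\mathfrak s]\subset\mathfrak s$ from Lemma~\ref{lemma: s is an ideal in r rtimes N_l(r)}, the paper observes that every derivation of $\mathfrak r$ takes values in the nilradical $\mathfrak n(\mathfrak r)$, so $[\mathfrak k,\mathfrak s]\subset\mathfrak s$ will follow once one knows $\mathfrak n(\mathfrak r)\subset\mathfrak s$.  That inclusion is then proved by a direct eigenvalue argument using the \emph{other} direction of the normal modification, namely $\mathfrak r\subset\mathfrak s\rtimes\mathfrak t$: writing an element of $\mathfrak r$ as $X+K$ with $X\in\mathfrak s$ and $K\in\mathfrak t$, the eigenvalues of $ad(X+K)$ are sums of (real) eigenvalues of $ad\,X$ and (purely imaginary) eigenvalues of $K$; hence $ad(X+K)$ nilpotent forces $K=0$, i.e.\ $\mathfrak n(\mathfrak r)\subset\mathfrak s$.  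This bypasses completely the identification $\psi(X)=-K_X$, the $\mathfrak r$ versus $\mathfrak r''$ issue you flagged, and any appeal to the Levi decomposition of $Rad(Der(\mathfrak r))$.

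Two cautions about your route.  First, your assertion that $M_K$, $M_P$, $N$ are each \emph{characteristic} in $Rad(Der(\mathfrak r))$ is not correct as stated: the unipotent radical $N$ is characteristic, but the reductive Levi factor $M=M_K M_P$ is only unique up to conjugation by $N$.  What you actually need is weaker---that $\mathfrak k$ normalizes $M_K$---and this does hold, since $M_K=\mathfrak k\cap Rad(Der(\mathfrak r))$ is a solvable ideal of the compact algebra $\mathfrak k$ and hence central there.  Second, once you unwind your equivariance condition $\psi(K\cdot X)=[K,\psi(X)]$, the left side vanishes (because $K\cdot X\in\mathfrak n(\mathfrak r)$ and $\psi|_{\mathfrak n(\mathfrak r)}=0$ once $\mathfrak n(\mathfrak r)\subset\mathfrak s$), so the content is exactly $[K,\psi(X)]=0$, i.e.\ $\psi(\mathfrak r)$ central in $\mathfrak k$.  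In other words your argument, when cleaned up, collapses to the same two ingredients the paper uses---$\mathfrak n(\mathfrak r)\subset\mathfrak s$ and centrality of $\mathfrak t$ in $\mathfrak k$---just arrived at less directly.
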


\begin{proof}
By the construction of $\mathfrak s$, it is clearly a complement of $\mathfrak k$ in $\mathfrak r \rtimes \mathfrak k$.  Further, every element of $ad~\mathfrak k$ has purely imaginary eigenvalues on $\mathfrak r\rtimes \mathfrak k$, and so $\mathfrak s$ will be a maximal completely solvable ideal as soon as we show that it is ideal.

As $[\mathfrak s,\mathfrak r] \subset \mathfrak s$ by Lemma \ref{lemma: s is an ideal in r rtimes N_l(r)},  it suffices to show that $\mathfrak s$ is stable under $\mathfrak k$.  However, as every derivation of $\mathfrak r$ takes its image in the nilradical, it suffices to show that $\mathfrak s$ contains the nilradical of $\mathfrak r$.

As stated above,  $\mathfrak r$ being a normal modification of $\mathfrak s$ gives  $\mathfrak r \subset \mathfrak s \rtimes \mathfrak t$  where $\mathfrak t$ consists of skew-symmetric derivations of $\mathfrak s$ and so every element of  $\mathfrak r$ may be written as $X + K$ where $X\in \mathfrak s$ and $K\in \mathfrak t \subset Der(s) \cap \mathfrak{so}(\mathfrak s)$.  One can quickly see, as in the proof of Proposition \ref{prop: max comp solv ideal}, that the eigenvalues of $ad~(X+K)$ are sums of eigenvalues of $ad~X$ and $K$.  Since $ad~X$ has real eigenvalues and $K$ has purely imaginary eigenvalues, we see that $ad~(X+K)$ having only the zero eigenvalue implies $K=0$.  That is, the nilradical of $\mathfrak r$ is contained in $\mathfrak s$.

\end{proof}

Before moving on with the rest of the proof of our main result, we record a consequence of the work done above.

\begin{thm}\label{thm: unique completely solv associated to any solv}  Let $R$ be a solvable Lie group.  Up to isomorphism, there is a single completely solvable group $S$ which can be realized as a modification of $R$.
\end{thm}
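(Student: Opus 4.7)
The plan is to observe that the uniqueness argument sketched just above Proposition \ref{prop:  s contained in r rtimes k} --- namely, that two completely solvable algebras $\mathfrak{s}_g, \mathfrak{s}_h$ associated to different metrics on $R$ are conjugate via $Aut(R)$ --- extends essentially verbatim to an \emph{arbitrary} pair of completely solvable modifications of $\mathfrak r$. Given that, the theorem follows at once, and in fact even yields conjugacy (not just abstract isomorphism) of the two $\mathfrak s_i$ inside $Aut(\mathfrak r)$, which then lifts to the simply-connected groups.

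The key observation making this work is that Proposition \ref{prop:  s contained in r rtimes k} is really a structural statement: its hypothesis is only that $\mathfrak s$ and $\mathfrak r$ are mutual normal modifications --- guaranteed for any completely solvable $\mathfrak s$ by Lemmas \ref{lemma: modification of completely solvable is normal modification} and \ref{lemma: norman modification goes both ways} --- not that $\mathfrak s$ arose from the specific metric-based construction in the preceding section. Granting this, I would proceed as follows. Let $\mathfrak s_1, \mathfrak s_2$ be two completely solvable modifications of $\mathfrak r$, and apply Proposition \ref{prop:  s contained in r rtimes k} to each to produce maximal compact subalgebras $\mathfrak k_1, \mathfrak k_2 \subset Der(\mathfrak r)$ so that $\mathfrak s_i$ is the maximal completely solvable ideal of $\mathfrak r \rtimes \mathfrak k_i$. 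Since maximal compact subalgebras of $Der(\mathfrak r)$ are conjugate, choose $\phi \in Aut(\mathfrak r)$ with $\mathfrak k_1 = \phi \mathfrak k_2 \phi^{-1}$. Then $\phi \mathfrak s_2 \phi^{-1}$ is a completely solvable ideal of $\mathfrak r \rtimes \mathfrak k_1$ of dimension $\dim \mathfrak r = \dim \mathfrak s_1$ (every modification has the same underlying vector-space dimension as $\mathfrak r$), so by the maximality assertion in Proposition \ref{prop: max comp solv ideal} it must equal $\mathfrak s_1$. Passing to simply-connected groups yields $S_1 \cong S_2$.

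The only spot that requires any care is the opening observation: one must check that Proposition \ref{prop:  s contained in r rtimes k} really does hold in this greater generality, i.e.\ that its proof never appealed to the specific metric $g'$ used to build $\mathfrak s$ in the previous subsection. Reviewing that proof, the inputs are precisely Lemma \ref{lemma: s is an ideal in r rtimes N_l(r)}, the normal-modification package of Proposition~2.4 of \cite{GordonWilson:IsomGrpsOfRiemSolv}, and the existence of the abelian algebra $\mathfrak t$ in the stabilizer that normalizes both $\mathfrak r$ and $\mathfrak s$; none of these involves the particular metric. This is the one subtlety; once it is in hand, the theorem reduces to the bookkeeping above.
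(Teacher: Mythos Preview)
Your proposal is correct and is exactly the approach the paper intends: the paper records Theorem~\ref{thm: unique completely solv associated to any solv} as ``a consequence of the work done above'' without further argument, and you have correctly made explicit the one point that needs checking --- namely, that the proof of Proposition~\ref{prop:  s contained in r rtimes k} (via the lemma following Lemma~\ref{lemma: modification of completely solvable is normal modification}) uses only the mutual-normal-modification relationship between $\mathfrak r$ and $\mathfrak s$, not the particular metric construction of $\mathfrak s$. Once that is observed, the conjugacy argument the paper gives for $\mathfrak s_g$ and $\mathfrak s_h$ applies verbatim to any two completely solvable modifications, just as you say.
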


\subsection{Maximal symmetry for $R$}  We are now in a position to complete the proof that for a simply-connected, unimodular solvable Lie group there is a single largest isometry group up to conjugation by diffeomorphisms.  In fact, we will see that the diffeomorphism can be chosen to be a composition of an automorphism of $R$ together with an automorphism of $S$.

Starting with a metric $g$ on $R$, we first constructed another metric $g''$ such that 
	$$Isom(R,g)< Isom(R,g'') = S_g \rtimes K,$$
where $S=S_g$ is a completely solvable group (depending on $g$) and $K$ is some maximal compact subgroup of $Aut(S)$.  Let $h$ be another metric on $R$ with corresponding group $S_h$.  From the above, we may replace $h$ with $\phi^*h$ for some $\phi \in Aut(R)$ to assume that $S_h=S_g=S$.

Now, as $K$ is unique up to conjugation in $Aut(S)$, we have the desired result.

\subsection{Proof of corollary \ref{cor: main cor on ricci solitons} }
As in the above, the strategy is to reduce to the setting of completely solvable groups.  We briefly sketch the argument for doing this.

By Theorem 8.2 of \cite{Jablo:HomogeneousRicciSolitons}, any solvable Lie group $R$ admitting a Ricci soliton metric must be a modification of a completely solvable group $S$ which admits a Ricci soliton.  (In fact, those Ricci soliton metrics are isometric.)  From our work above, the modification is a normal modification and so the group $S$ is the same as the group we construct above.  Now the problem is reduced to proving that Ricci soliton metrics on completely solvable Lie groups are maximally symmetric, but this has been resolved - see Theorem 4.1 of \cite{Jablo:ConceringExistenceOfEinstein}.

\section{Constructing $S$ from algebraic data of $R$}
In the above work, we started with a solvable Lie group $R$ and built an associated completely solvable Lie group $S$.  The group $S$ was unique, up to conjugation by $Aut(R)$, but it was built by starting with a metric on $R$, doing modifications to $R$, changing the metric, doing more modifications and then extracting information from the modification $R''$.  We now give a straight-forward description of the group $S$.

Let $K$ be some choice of maximal compact subgroup of $Aut(R)$.  The group $S$ is the simply-connected Lie group whose Lie algebra is the `orthogonal complement' of $\mathfrak k$ in $\mathfrak r \rtimes \mathfrak k$ relative to the Killing form of $\mathfrak r \rtimes \mathfrak k$, i.e.
	\begin{equation}\label{eqn: algebraic description of s}
	\mathfrak s = \{ X\in \mathfrak r \rtimes \mathfrak k \ | \ B(X,Y) = 0 \mbox{ for all } Y\in \mathfrak k \}
	\end{equation}
where $B$ is the Killing form of $\mathfrak r \rtimes \mathfrak k$.  

One can see this quickly by showing that the algebra described in Eqn.~\ref{eqn: algebraic description of s} is  also a maximal completely solvable ideal and then Proposition \ref{prop: max comp solv ideal} shows that is must be $\mathfrak s$.  The details of the proof are similar to work done above  and so we leave them to the diligent reader.

\providecommand{\bysame}{\leavevmode\hbox to3em{\hrulefill}\thinspace}
\providecommand{\MR}{\relax\ifhmode\unskip\space\fi MR }
\providecommand{\MRhref}[2]{%
  \href{http://www.ams.org/mathscinet-getitem?mr=#1}{#2}
}
\providecommand{\href}[2]{#2}

\end{document}